\newcommand{\rset}{\mathbb{R}}
\newcommand{\Eb}{\mathbb{E}}
\newcommand{\red}{\textcolor{black}}
\DeclareMathOperator*{\argmin}{arg\,min}
\newtheorem{assumption}[theorem]{Assumption}
\title{	A systematic approach to general higher-order majorization-minimization algorithms for (non)convex optimization}
\author{Ion Necoara\thanks{Automatic Control and Systems
		Engineering Department, National University of Science and Technology  Politehnica Bucharest, Romania and   Gheorghe Mihoc-Caius Iacob Institute of Mathematical Statistics and Applied Mathematics of the Romanian Academy. \email{ion.necoara@upb.ro.}}
	\and Daniela Lupu\thanks{Automatic Control and Systems
		Engineering Department, National University of Science and Technology  Politehnica Bucharest, Romania. \email{daniela.lupu@upb.ro.}}}
\begin{document}

\maketitle

% REQUIRED
\begin{abstract}
	Majorization-minimization algorithms consist of successively minimizing a sequence of upper bounds of the objective function so that along the iterations the objective  decreases. Such a simple principle  allows to solve a large class of optimization problems,  even nonconvex and nonsmooth. We propose a general  higher-order majorization-minimization algorithmic framework for minimizing an objective function that admits an approximation (also called surrogate)  such that the corresponding error function has  a higher-order Lipschitz continuous  derivative. We present convergence guarantees for our new method for general optimization problems with (non)convex and/or (non)smooth objective function.  For  convex (possibly nonsmooth) problems we provide global sublinear convergence rates,  while for problems with uniformly convex objective function we obtain  faster  local  superlinear convergence rates.   We also prove global   first-order optimality conditions guarantees and sublinear convergence rates for general nonconvex (possibly nonsmooth)  problems and under Kurdyka-Lojasiewicz property of the objective we derive local convergence rates ranging from sublinear to superlinear for our majorization-minimization algorithm. Finally, for unconstrained  nonconvex problems we derive convergence rates in terms of  first- and second-order optimality conditions.
\end{abstract}

% REQUIRED
\begin{keywords}
 (Non)convex optimization, majorization-minimization, higher-order methods, convergence rates.  %\LaTeX
\end{keywords}

% REQUIRED
\begin{AMS}
90C25, 90C06, 65K05.
\end{AMS}

%%%%%%%%%%%%%%%%%%%%%%%%%%%%%%%%%%%%%%%%

\section{Introduction}
The principle of successively minimizing upper bounds of the objective function is often called \textit{majorization-minimization} \cite{LanHun:00, Mai:15, RazHon:13}. Most techniques, e.g.,  gradient descent, utilize convex quadratic majorizers based on first-order oracles  in order to guarantee that the majorizer is easy to minimize.  Despite the  empirical success of first-order majorization-minimization algorithms  to solve difficult optimization problems, the convergence speed of such methods  is known to slow down close to saddle points or in ill-conditioned landscapes \cite{Nes:04,Pol:87}. Higher-order methods are known to be less affected by these problems \cite{BriGar:17, Nes:19, cartis2017,cartis:2020}. 
% Hence, it seems to be a natural idea to increase the performance of such numerical methods by employing high-order oracles \cite{BriGar:17, Nes:19, cartis2017}. 
In this work, we  focus our attention on  higher-order majorization-minimization methods to  find (local) minima of   (potentially  nonsmooth and nonconvex) objective functions. At each iteration, these algorithms construct and optimize a (usually Taylor-based)  local model of the objective using higher-order derivatives with an additional penalty term that enforces the model  to approximate  well  the real objective locally. 

\medskip 

\noindent \textbf{Contributions}.  This paper provides a unified algorithmic framework  based on the notion of higher-order upper bound approximations of the  (non)convex and/or (non)smooth objective function, leading to a    \textit{general higher-order majorization - minimization}  algorithm, which we call GHOM.   Then,    we present convergence guarantees for the GHOM algorithm for  general  optimization problems when the upper bounds approximate the objective function  up to an error that is $p \geq 1$ times  differentiable and has a Lipschitz continuous $p $ derivative; we call such upper bounds \textit{higher-order surrogate} functions.  More precisely,  on general convex (possibly nonsmooth)  problems GHOM algorithm  achieves  global sublinear convergence rate  for the  function values. When we apply our method to  optimization problems with uniformly convex objective function, we obtain faster  local superlinear  convergence rates  in multiple criteria: function values, distance of the iterates to the optimal point  and  norm of  subgradients.  The last criterion also implies convergence rates for the residual of the first-order optimality conditions.  Then, for general nonconvex  (possibly nonsmooth) problems we  prove for GHOM global asymptotic   convergence to a point from which there is no descent direction as well as  convergence rates for the residual  of the first-order optimality conditions.  We also characterize the convergence rates of  GHOM algorithm  locally in terms of function values under the Kurdyka-Lojasiewicz (KL) property.  Our results show that the convergence behavior  of GHOM ranges from sublinear to superlinear   depending on the parameter of the underlying KL geometry. Finally, on  smooth unconstrained nonconvex problems  we derive convergence rates in terms of first- and second-order optimality conditions. 

\medskip 

\noindent The folllowing remarks clarify better our contributions. First, it is known that for general nonsmooth, nonconvex problems, answering to the question of whether a descent direction exists from a point is NP-hard \cite{Nes:13}.  However, for nonconvex problems with composite structure (i.e., the objective function formed as a sum of two terms,  one is smooth and another is simple and convex) Nesterov proves for some first-order schemes convergence  to a point from which there is no descent direction \cite{Nes:13} (see also \cite{cartis:2020} for  a similar result on problems with smooth objective and simple constraints). \textit{In this paper, we derive global convergence results  (including convergence rates) for GHOM to a point from which there is no descent direction for  general nonconvex problems. The only condition  we require is that the objective function admits a higher-order surrogate (see Definition \ref{def:sur}).}   

\medskip    

\noindent Second, besides providing a \textit{unifying  framework} for the design and analysis of  higher-order majorization-minimization  methods, in special cases, where complexity bounds are known for some particular higher-order algorithms, our convergence results recover the  existing bounds.   E.g., our rates recover for $p=1$ the  convergence bounds for the (proximal) gradient  \cite{Mai:15,AttBol:09,RazHon:13} and Gauss-Newton  \cite{DruPaq:19, Pau:16}  type algorithms from the literature.  For $p>1$ and convex composite objective functions (see Example \ref{expl:5}), we recover the sublinear convergence results from \cite{Nes:19Inxt} and the local superlinear convergence  rates  from \cite{DoiNes:2019}.  For  $p>1$ and  nonconvex unconstrained optimization problems (see Example \ref{expl:4}), we recover the convergence results from \cite{BriGar:17,cartis2017} and for smooth  problems with simple constraints we obtain similar rates as in  \cite{cartis:2020}.  \textit{However, for other examples (such as, Examples \ref{expl:88},  \ref{expl:6}, in the convex case; Examples \ref{expl:8}, \ref{expl:88}, \ref{expl:5},  \ref{expl:888} and  \ref{expl:6} in the  nonconvex case) our convergence results seem to be new}.    

\medskip  

\noindent Finally,  our unifying algorithmic framework is inspired in part by the recent work  on higher-order Taylor-based  methods for  convex optimization  \cite{Nes:19, Nes:19Inxt}, but it yields a more general update rule  and it is also appropriate for nonconvex optimization. Note that there  is a major difference between the Taylor expansion and the model approximation based on a general majorization-minimization framework. Taylor expansion is unique.  Conversely, majorization-minimization approach may admit many upper bound models for a given objective  and every model leads to a different optimization method. Another major difference between our approach and the existing works such as  \cite{AttBol:09, BriGar:17, DruPaq:19, Nes:19, Nes:19Inxt, cartis2017,cartis:2020} is that  we assume Lipschitz continuity of the $p$ derivative of the error function, while the other papers   assume directly the Lipschitz continuity of the $p$ derivative of the objective.  \textit{Hence, our framework leads  to an elegant and general convergence analysis  in both convex and nonconvex settings, with convergence proofs  different from the existing works, and yielding new convergence results and rates  or covering results  that are otherwise scattered across  a dozen papers.}

%%%%%%%%%%%%%%%%%%%%%%%%%%%%%%%%%%

\subsection{Related work}
\textbf{Higher-order methods} are popular due to their performance in dealing with ill conditioning and fast  rates of convergence \cite{BriGar:17, CarGou:11, cartis2017, cartis:2020, GraNes:19,  NesPol:06, Nes:19, Nes:19Inxt}.  For example, in \cite{Nes:19} the following  unconstrained convex  problem was considered:
\begin{equation}\label{sota:eq1}
	f^* = \min_{x \in \rset^n} f(x),
\end{equation}
\noindent where $f$ is convex, $p$ times continuously differentiable and with the $p$ derivative Lipschitz continuous  of constant $L_p^f$ (see Section \ref{sec:notation} for a precise definition). Then, Nesterov proposed  in \cite{Nes:19} the following higher-order Taylor-based iterative method for finding an optimal solution of the convex problem  \eqref{sota:eq1}: 
\vspace*{-0.2cm}
\begin{equation}
	\label{sota:eq2}
	x_{k+1} \!=\! \argmin_{y \in \rset^n}  g(y;x_k)   \left(\!:= \!\sum_{i=0}^{p} \!\frac{1}{i !} \nabla^{i} f(x_k)[y \!-\! x_k]^{i} \!+\! \frac{M_p}{(p+1)!}\|y \!-\! x_k\|^{p+1} \! \right),
\end{equation}
\noindent where $M_p \geq L_p^f$. Hence, at each iteration  one needs to construct and optimize a local Taylor model of the objective  with an additional regularization term that depends on how well the model approximates the real objective.  This is a natural extension of the cubic regularized Newton's method extensively analyzed e.g., in \cite{NesPol:06, CarGou:11, ZhoWan:18}. Under the above settings  \cite{Nes:19}  proves the following  convergence rate in   function  values for 	\eqref{sota:eq2}:
\[
f\left(x_{k}\right)-f^{*} = \mathcal{O}\left(\frac{1}{k^p}\right) \quad \forall  k \geq 1.
\]

\noindent  Extensions of this method to  composite convex problems and to objective functions with Holder continuous higher-order derivatives have been given in \cite{Nes:19Inxt} and \cite{GraNes:19}, respectively,  inexact variants were analyzed e.g., in \cite{NesDoi:20}, higher-order proximal point variants were given  e.g., in \cite{Nes:21}  and local superlinear convergence results were given recently in \cite{DoiNes:2019}. 

\medskip  

\noindent Further, for  the unconstrained nonconvex case \cite{BriGar:17,cartis2017} provides  convergence rates for a similar algorithm. The basic assumptions are that $f$ is $p$ times continuously differentiable having the $p$ derivative smooth with constant $L_p^f$ and bounded below. For example, \cite{cartis2017} proposes an adaptive regularization  algorithm (called AR$p$), which requires building  a higher-order model based on an appropriate regularization of the $p$  Taylor approximation $g(y;x_k)$.
%	 \begin{equation*}
%	 g(y; x) = \sum_{i=0}^{p} \frac{1}{i !} \nabla^{i} f(x_k)[y-x]^{i}   + \frac{M_p}{p+1}\| y-x\|^{p+1}.
%	 \end{equation*}
Basically, at each iteration the  AR$p$ algorithm    approximately computes a (local) minimum of the  model $g(y;x_k)$ that must satisfy certain second-order optimality conditions:
%\begin{equation*}
$	x_{k+1} \approx  \argmin_{y \in \mathbb{R}^n} g(y; x_k)$.
%\end{equation*}
%in the sense that the following conditions need to be met:
%	 \begin{align*}
%	 m(s_k;x_k,M_p^k) \leq & m(0; x_k, M_p^k), \qquad \| \nabla_s m(s_k; x_k, M_p^k)\| \leq \theta \|s_k\|^p, \\
%	 & \max \left[0,-\lambda_{m}(s_k; x_k, M_p^k)\right] \leq \theta \|s_k\|^{p-1}, 
%	 \end{align*}
%	 where $\lambda_{m}(s_k; x_k, M_p^k) = \lambda_{\min}(\nabla^2_s m(s_k;x_k, M_p^k)). $
%	 Further, in order for the point $s_k$ to be accepted, one needs to compute:
%	 $$
%\rho_{k}=\frac{f\left(x_{k}\right)-f\left(x_{k}+s_{k}\right)}{ m(0; x_k, M_p^k) -m(s_k; x_k, M_p^k)}.
%	 $$
%If $\rho_{k} \geq \eta_1$,  then $x_{k+1} =x_k + s_{k}$, otherwise $x_{k+1} = x_k$. In the last step, the regularization parameter $M_p^k$ is updated depending on the value of $\rho_k$. 
For the AR$p$ algorithm,  \cite{cartis2017} proves  the best known convergence rate for this class of problems ($f$ nonconvex with $p>1$ derivative smooth) in terms of the  first- and second-order optimality conditions:  
\vspace*{-0.2cm}
\begin{align*} 
	%\label{rate:cartis}
	\min_{i=1:k} \max  &  \left(- \lambda_{\min}^{\frac{p+1}{p-1}}(\nabla^2 f(x_{i})),\;  	
	\| \nabla f(x_{i})\|_{*}^{\frac{p+1}{p}} \right) =\mathcal{O}\left(\frac{1}{k}\right).
\end{align*}

\vspace*{-0.2cm}

\noindent Extension of these results to smooth optimization problems with simple constraints were  given recently in  \cite{cartis:2020}.  Furthermore, several studies  demonstrate that special geometric properties of the objective function, such as gradient dominance condition or Kurdyka-Lojasiewicz (KL) property \cite{BolDan:07}, can enable faster convergence of the ARp algorithm (e.g.,  when $p=2$), see  for example   \cite{NesPol:06,ZhoWan:18}. 

\medskip 

\noindent Notably, all these approaches require optimizing a $(p+1)$-th order polynomial, which is known to be a difficult problem. Recently, \cite{Nes:19, Nes:19Inxt} introduced an implementable method  for convex functions (see also Lemma \ref{lema:conv} below). In particular, for $p=2$ and $p=3$ there are efficient methods from convex optimization for minimizing the corresponding Taylor-based upper approximation \eqref{sota:eq2}, see e.g., \cite{CarGou:11, CarDuc:16, GraNes:19sub, Nes:19, Nes:19Inxt} . 

\medskip 

\noindent \textbf{Majorization-minimization algorithms}  approximate at each iteration the objective function by a  majorizing function that is easy to minimize \cite{BolPau:16, LanHun:00}.  Most techniques, e.g.,  gradient descent, utilize convex quadratic majorizers in order to guarantee that the model at each iteration is easy to minimize.  The framework of first-order majorization-minimization methods, i.e. methods that are using only gradient information to build the upper model,   has been analyzed  widely in the literature of the recent two decades, see e.g.,  \cite{BolPau:16, HunLan:04, LanHun:00, Mai:15, RazHon:13}. \textit{However, to the best of our knowledge there are no results on the convergence behavior of general  higher-order majorization-minimization algorithms, i.e. methods that are using higher-order derivatives  to build the upper model.}  In this paper we  provide a general framework for the design of  higher-order majorization-minimization  algorithms and derive a complete convergence analysis for them (global and local convergence rates) covering a large class of optimization problems, that is convex or nonconvex, smooth or nonsmooth.

\medskip 

\noindent \textbf{Content}. The paper is organized as follows: Section \ref{sec:notation} presents notation and preliminaries; in Section \ref{sect:ghom} we define our higher-order majorization-minimization framework and the corresponding algorithm;   in Section \ref{sec:conv} we derive  convergence results for our scheme in the convex settings, while the convergence analysis for nonconvex problems is given in Section~\ref{sec:non-conv}.

%%%%%%%%%%%%%%%%%%%%%%%%%%%%%%%%%%
%%%%%%%%%%%%%%%%%%%%%%%%%%%%%%%%%

\subsection{Notations and  preliminaries} 
\label{sec:notation}
We denote a finite-dimensional real vector space with $\mathbb{E}$ and by $\mathbb{E}^*$ its dual space composed by linear functions on $\mathbb{E}$. For such a function $s \in  \mathbb{E}^*$, we denoted by $\langle s, x\rangle$ its value at $x \in \mathbb{E}.$ Utilizing a self-adjoint positive-definite operator $B: \mathbb{E} \rightarrow \mathbb{E}^*$ (notation $B = B^{*} \succ 0$), we can endow  these spaces with\textit{ conjugate Euclidean norms}:
$$\|x\|=\langle B x, x\rangle^{1 / 2} \quad \forall x \in \mathbb{E}, \quad\|y\|_{*}=\left\langle y, B^{-1} y\right\rangle^{1 / 2}  \quad \forall y \in \mathbb{E}^{*}.$$
\noindent   For a convex set $\mathcal{X} \subset \mathbb{E}$, the normal cone of $\mathcal{X}$ at $\bar x \in \mathcal{X}$ is defined as $\mathcal{N}_{\mathcal{X}}(\bar x) = \{ g:  \langle g,  \bar x - x\rangle \geq 0 \; \; \forall x \in \mathcal{X}   \}$.    For an extended real-valued  function  $\psi:   \mathbb{E}  \rightarrow \bar{\mathbb{R}}  \, (:= \mathbb{R} \cup \{\infty\}$), we always assume that it is proper (i.e., $\psi \not= \infty$), lower semicontinuous (lsc) and its domain is $\text{dom} \psi=\{x \in \mathbb{E}: \; \psi(x) < \infty \}$.  The regular subdifferential of $\psi$ at $\bar x \in \text{dom} \psi$ is defined as \cite{Roc:70}:  
$$\hat \partial  \psi(\bar x) =\{ \psi^{\bar x}:  \liminf\limits_{\substack{x \to \bar{x}, x \neq \bar{x}}}  \frac{\psi(x) - \psi(\bar x) - \langle \psi^{\bar x}, x - \bar x \rangle  }{\|x -\bar x\|} \geq 0.$$  
Further, the limiting subdifferential of $\psi$ at $\bar x$ is defined as the sequential outer limit 
$$\partial \psi(\bar x) = \limsup_{\substack{x \xrightarrow[]{\psi} \bar{x}}}  \hat \partial  \psi(x),$$ where the notation $\substack{x \xrightarrow[]{\psi} \bar{x}}$ signifies that $x \to \bar x$ with $\psi(x) \to \psi(\bar x)$. Note that $\partial \psi(\bar x) \not= \emptyset$ if $\psi$ is locally Lipschitz around $\bar x$.  Moreover, for $\psi$ convex function, $\partial \psi(x)$  denotes its subdifferential at $x$, which  is a closed set.  Let us  denote:
$$S(x) = \text{dist}(0, \partial \psi(x)) \;  \left(:= \inf_{\psi^x \in \partial \psi(x)} \| \psi^x\|_* \right)  \quad \forall x \in \text{dom} \, \psi. $$  
If  $\partial \psi(x) = \emptyset$, we set $S(x) = \infty$.  For a smooth function  $\psi:   \mathbb{E}  \rightarrow \bar{\mathbb{R}}$,  which is $p\geq 1$ times continuously differentiable on the convex and open domain $\text{dom}  \psi =\{ x \in \mathbb{E}: \psi(x) < \infty\}$,  denote by $\nabla \psi(x)$  its gradient and  $\nabla^2 \psi(x)$ its Hessian   at the point $x \in \text{dom} \psi$. Note that  $\nabla \psi(x) \in \mathbb{E}^* $  and $  \nabla^2 \psi(x) d \in \mathbb{E}^*$ for all $d \in \mathbb{E}$.  In what follows, we often work with directional derivatives:   $D^p \psi(x) \left[d_{1}, \ldots, d_{p}\right]$ denotes	  the $p$ directional derivative of function $\psi$ at $x \in \text{dom} \psi$ along directions  $d_1, \cdots, d_p \in \mathbb{E}$ (see also \cite{Nes:19, Nes:19Inxt} for a similar exposition).  For example, for a twice differentiable function $\psi$ one has for any $x \in \text{dom}  \psi $ and $d, \bar d \in \mathbb{E}$ that $D^1 \psi(x)[d] = \langle  \nabla \psi(x), d\rangle$ and $D^2 \psi(x)[d,\bar d] = \langle  \nabla^2 \psi(x) d, \bar d\rangle$. On the other hand, in this paper we consider  for a nonsmooth function $\psi$  the first directional derivative  classically defined as: $D \psi(x)[d] = \lim_{t \downarrow 0} \left(  \psi(x + td) - \psi(x) \right)/t$ (for simplicity, we omit the index $p$ for $p=1$ and use $D \psi(x)[d]$ instead of $D^1 \psi(x)[d]$).  
%Note that this definiton  is valid  not only for convex functions and, obviously, for  differentiable functions, but also for a broader class of functions, so-called locally convex.  
Note that   $ D^p \psi(x) [\cdot]$ is  a symmetric  $p$  multilinear form on $\mathbb{E}$.  The abbreviation $D^p \psi(x)[d]^{p}$
	is used when all directions are the same, i.e., $d_{1}=\cdots=d_{p}=d$ for some $d \in \mathbb{E}$. The norm of $D^p \psi(x)$ is defined in the standard way (see \cite{Nes:19,Nes:19Inxt}):
	\begin{equation*} 
		\|D^p \psi(x) \|:=\max _{\left\|d_{1}\right\|,\cdots,\left\|d_{p}\right\| \leq 1}\left|D^p \psi(x)\left[d_{1}, \ldots, d_{p}\right]\right|  =  \max _{\left\|d\right\| \leq 1}\left|D^p \psi(x)\left[d\right]^p\right|   \text{.}
	\end{equation*}

\noindent Since for any fixed $x, y \in \text{dom} \psi$ the form  $D^p \psi(x) [\cdot] - D^p \psi(y) [\cdot]$ is also $p$ multilinear and symmetric,  then we can define the following class of \textit{$p$ smooth functions}: 

\begin{definition} Let $\psi: \mathbb{E} \rightarrow \mathbb{R}$ be $p\geq 1$ times continuously differentiable. Then, the $p$ derivative of $\psi$ is Lipschitz continuous if there exist $L_p^{\psi} > 0$  such that
	\begin{equation} \label{eq:1}
		\| D^p \psi(x) - D^p \psi(y) \| \leq L_p^{\psi} \| x-y \| \quad  \forall x,y \in \text{dom} \psi.
	\end{equation}
	%\qed  
\end{definition}

\noindent We denote the Taylor approximation of $\psi$ around $x \in \text{dom} \psi$ of order $p$ by:
\vspace*{-0.2cm}
$$
T_p^{\psi}(y;x)= \psi(x) + \sum_{i=1}^{p} \frac{1}{i !} D^{i} \psi(x)[y-x]^{i}  \quad \forall y \in E.
$$		
\noindent It is known that if \eqref{eq:1} holds, then by the standard integration arguments  the residual between function value and its Taylor approximation can be bounded \cite{Nes:19}:
\vspace*{-0.2cm}
\begin{equation}\label{eq:TayAppBound}
	|\psi(y) - T_p^{\psi}(y;x) | \leq  \frac{L_p^{\psi}}{(p+1)!} \|y-x\|^{p+1}  \quad  \forall x,y \in \text{dom} \psi. 
\end{equation}

\vspace*{-0.2cm}

\noindent Applying the same reasoning for the functions $\langle \nabla \psi(x), d\rangle$ and $\langle \nabla^2 \psi(x) d, d\rangle$, with direction $d \in \Eb$ being fixed,   we also get the following inequalities valid for all $ x,y \in \text{dom} \psi$ and $p \geq 2$, see also \cite{Nes:19,Nes:19Inxt}:
\vspace*{-0.2cm}
\begin{align} \label{eq:TayAppG1}
	&\| \nabla \psi(y) - \nabla T_p^{\psi}(y;x) \|_* \leq \frac{L_p^{\psi}}{p!} \|y-x \|^p, \\
	\label{eq:TayAppG2}
	&\|\nabla^2 \psi(y) - \nabla^2 T_p^{\psi}(y;x) \| \leq \frac{L_p^{\psi}}{(p-1)!} \| y-x\|^{p-1}.
\end{align}

\vspace*{-0.2cm}

\noindent For the Hessian we consider  the spectral norm of self-adjoint linear operators (maximal module of all eigenvalues computed with respect to operator $B$).  Next, we provide several examples of functions that have known Lipschitz continuous $p$ derivatives. 

%-----------------------------------------------------------------------------

\begin{example}
	\label{expl:1}
		Given $x_0 \in \mathbb{E}$ {and a self-adjoint positive-definite operator  $B \succ 0 $, defining the  norm $\|x\|=\langle B x, x\rangle^{1 / 2} $ for all $ x \in \mathbb{E}$,}  then the power  norm $\psi(x)= \left\|x-x_{0}\right\|^{p+1}$, with $p \geq 1$, has the $p$ derivative Lipschitz continuous  with Lipschitz  constant $L_{p}^{\psi}=(p+1)!$, see Theorem 7.1 in  \cite{NesRod:20}.  %\qed
\end{example}

%-----------------------------------------------------------------------------

\begin{example}
	\label{expl:2} 
	For given $a_{i} \in \mathbb{E}^{*}, 1 \leq i \leq m,$ consider the log-sum-exp function:
	\vspace*{-0.2cm}
	\[  \psi(x)=\log \left(\sum_{i=1}^{m} e^{\left\langle a_{i}, x\right\rangle}\right), \quad x \in \mathbb{E}.  \]
	
	\vspace*{-0.2cm}
	
	\noindent Note that for $m=2$ and  $a_1 =0$, we recover the logistic regression loss function,  widely used in machine learning \cite{Mai:15}.  Furthermore, for the Euclidean norm $\|x\|=\langle B x, x\rangle^{1 / 2}$ for $ x \in \mathbb{E}$ and $B:=\sum_{i=1}^{m} a_{i} a_{i}^{*}$ (assuming $B \succ 0,$ otherwise we can reduce dimensionality of the problem), the Lipschitz continuous condition	\eqref{eq:1} holds with $L_{1}^\psi=1$ for $p=1$, $L_{2}^\psi=2$ for $p=2$ and $L_{3}^\psi=4$ for $p=3$, see e.g.,  \cite{DoiNes:2019}.	
\end{example}

%---------------------------------------------------------------------------------

\begin{example}
	\label{expl:3} 
	For a $p \geq 2$ times differentiable function $\psi$ its Taylor approximation of order $p$ at a given point $x$,   $T_p^{\psi}(\cdot;x)$,  has the  $p-1$ derivative Lipschitz continous with Lipschitz  constant  $L_{p-1}^{T_p^{\psi}} = \|D^p \psi(x)\|$.  
\end{example}

%--------------------------------------------------------------------------
\medskip 

\noindent Finally, in the convex case Nesterov proved in \cite{Nes:19} a remarkable result which states  that an appropriately regularized Taylor approximation of a convex function is a convex multivariate polynomial. 
\begin{lemma} \cite{Nes:19}
	\label{lema:conv}
	Assume $\psi$  convex and $p>2$ differentiable  function having  the $p$ derivative Lipschitz  continuous  with constant $L_p^\psi$ on $\Eb$.  Then, the regularized Taylor approximation:

\vspace{-0.5cm}

	\begin{equation*} 
		g(y,x) = T_p^{\psi}(y;x) + \frac{M_p}{(p+1)!}\| y-x\|^{p+1} 
	\end{equation*}

\vspace{-0.2cm}

\noindent 	is also a convex function  w.r.t.   $y$, provided that $M_p \geq pL_p^{\psi}$,   i.e.,  we have $\nabla^2_y g(y;x) - \nabla^2 \psi(y) \succeq 0$ for all $y \in \Eb$. 
\end{lemma}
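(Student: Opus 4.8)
The plan is to establish convexity of $g(\cdot;x)$ by showing that its Hessian with respect to the first variable is positive semidefinite at every point of $\mathbb{E}$. Since $p>2$, the Taylor polynomial $T_p^{\psi}(\cdot;x)$ is $C^{\infty}$ and the power $\|\cdot-x\|^{p+1}$ is $C^{2}$ on all of $\mathbb{E}$, so $g(\cdot;x)$ is $C^{2}$ on a convex domain and its convexity is equivalent to $\nabla^2_y g(y;x)\succeq 0$ for every $y\in\mathbb{E}$. I would then split
\[
\nabla^2_y g(y;x)=\nabla^2 T_p^{\psi}(y;x)+\frac{M_p}{(p+1)!}\,\nabla^2\phi(y),\qquad \phi(y):=\|y-x\|^{p+1},
\]
and bound the two summands from below separately.

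For the Taylor term I would combine convexity of $\psi$, which gives $\nabla^2\psi(y)\succeq 0$, with the second-order residual estimate \eqref{eq:TayAppG2}, namely $\|\nabla^2\psi(y)-\nabla^2 T_p^{\psi}(y;x)\|\le \frac{L_p^{\psi}}{(p-1)!}\|y-x\|^{p-1}$. Because the left-hand side is the norm of a symmetric bilinear form on $\mathbb{E}$, this bound is equivalent to $-\frac{L_p^{\psi}}{(p-1)!}\|y-x\|^{p-1}D\preceq\nabla^2\psi(y)-\nabla^2 T_p^{\psi}(y;x)\preceq\frac{L_p^{\psi}}{(p-1)!}\|y-x\|^{p-1}D$, and the right inequality rearranges to $\nabla^2 T_p^{\psi}(y;x)\succeq\nabla^2\psi(y)-\frac{L_p^{\psi}}{(p-1)!}\|y-x\|^{p-1}D\succeq-\frac{L_p^{\psi}}{(p-1)!}\|y-x\|^{p-1}D$. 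For the regularization term, writing $z:=y-x$, direct differentiation gives $\nabla\phi(y)=(p+1)\|z\|^{p-1}Dz$ and, for every direction $h\in\mathbb{E}$,
\[
\nabla^2\phi(y)[h]^2=(p+1)\|z\|^{p-1}\|h\|^2+(p+1)(p-1)\|z\|^{p-3}\langle Dz,h\rangle^2\;\geq\;(p+1)\|z\|^{p-1}\|h\|^2,
\]
so that $\nabla^2\phi(y)\succeq(p+1)\|z\|^{p-1}D$ and hence $\frac{M_p}{(p+1)!}\nabla^2\phi(y)\succeq\frac{M_p}{p!}\|y-x\|^{p-1}D$. Adding the two lower bounds yields
\[
\nabla^2_y g(y;x)\succeq\frac{1}{p!}\bigl(M_p-pL_p^{\psi}\bigr)\|y-x\|^{p-1}D\succeq0
\]
exactly when $M_p\ge pL_p^{\psi}$, which is the assertion.

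The whole argument is essentially a two-line Hessian comparison, so the main obstacle I anticipate is purely a matter of bookkeeping: verifying that $\phi(y)=\|y-x\|^{p+1}$ is genuinely $C^{2}$ for $p>2$ (so that the Hessian formula above is valid on all of $\mathbb{E}$, including at $y=x$, where every $\|z\|$-dependent term vanishes), and carefully passing from the multilinear-form norm bound in \eqref{eq:TayAppG2} to the two-sided operator inequality relative to the reference operator $D$. Beyond these routine points I expect no difficulty.
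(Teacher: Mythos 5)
Your proof is correct and follows essentially the same route as the paper's: both arguments reduce to the operator inequality $\nabla^2\phi(y)\succeq(p+1)\|y-x\|^{p-1}D$ for the regularizer (the paper's Eq.~\eqref{eq:4}) combined with the two-sided Hessian bound derived from \eqref{eq:TayAppG2} and the convexity of $\psi$; the paper merely writes the chain as $0\preccurlyeq\nabla^2\psi(y)\preccurlyeq\cdots=\nabla^2 g(y;x)$ rather than summing two lower bounds. Your quantitative lower bound $\nabla^2_y g(y;x)\succeq\frac{1}{p!}(M_p-pL_p^{\psi})\|y-x\|^{p-1}D$ is a harmless sharpening of the same computation.
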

As discussed in introduction,   in higher-order  methods one usually needs to minimize at each iteration a regularized higher-order  Taylor approximation.  %Thus, for the convex case, we dispose of a large number of powerful methods \cite{CarGou:11, CarDuc:16, GraNes:19sub, Nes:19Inxt} for finding the solution of the corresponding subproblem from each iteration. 
Therefore,  when minimizing convex functions with higher-order algorithms one can use  a large number of powerful methods from  convex optimization for solving the subproblem \cite{CarGou:11, CarDuc:16, GraNes:19sub, Nes:19Inxt}.  Further, let us introduce the class of  uniformly convex functions \cite{Nes:19Inxt,NesPol:06} which will play a key role in proving the superlinear convergence  of the iterates of our algorithm. 

\begin{definition}
	\noindent A function $\psi : \Eb \to \bar{\mathbb{R}}$ is \textit{uniformly convex} of degree $q \geq 2$ on $\text{dom} \;\psi$ if there exists a positive constant  $\sigma_{q}>0$ such that: 
	\begin{equation}
		\label{eq:unifConv}
		\psi(y) \geq \psi(x)+\left\langle \psi^{x}, y-x\right\rangle+\frac{\sigma_{q}}{q}\|x-y\|^{q} \quad   \forall x, y \in \text{dom} \, \psi, 
	\end{equation}
\noindent 	where $\psi^{x}$ is an arbitrary vector from the subdifferential $\partial \psi(x)$ at $x$. 
\end{definition}

\noindent Next, we establish a relation between uniform convexity of a function and its minimum over a convex set. 
\begin{lemma}
	\label{lema:uc}
Let $\psi : \Eb \to \bar{\mathbb{R}}$ be a  \textit{uniform convex} function satisfying  \eqref{eq:unifConv} and $\mathcal{X} \subseteq  \text{dom} \, \psi$ a closed convex set. Then, the function  $\psi + \textbf{1}_\mathcal{X}$  is  uniform convex on $\mathcal{X}$ and  the following relation holds: 
	\begin{align}\label{eq:probConv}
\min _{y \in  \mathcal{X}} \psi(y) \geq  	\psi(x) 	-\frac{q-1}{q}\left(\frac{1}{\sigma_{q}}\right)^{\frac{1}{q-1}}\left\|\psi_X^{x}\right\|_{*}^{\frac{q}{q-1}}   \quad \forall x \in \mathcal{X}, 
\end{align}
where $\psi_\mathcal{X}^{x} $ is an arbitrary vector from the subdifferential of the convex  function $\psi + \textbf{1}_\mathcal{X}$ at $x$, where $\textbf{1}_\mathcal{X}$ denotes the indicator function of the set $\mathcal{X}$.
\end{lemma}
\begin{proof}
Since  the function $\psi : \Eb \to \bar{\mathbb{R}}$ is  uniform convex on $\text{dom} \, \psi$ (i.e., it  satisfies   \eqref{eq:unifConv})    and the indicator function $\textbf{1}_\mathcal{X}$ is convex, adding them and using basic calculus rules for subdifferential of convex functions (i.e., $\partial \psi + \partial \textbf{1}_\mathcal{X} \subseteq \partial(\psi + \textbf{1}_X)$), we also get that the sum $\psi + \textbf{1}_\mathcal{X}$  is uniform convex on $\text{dom} (\psi + \textbf{1}_X) = \mathcal{X}$, i.e.:
\[   	(\psi + \textbf{1}_\mathcal{X}) (y) \geq (\psi + \textbf{1}_\mathcal{X})(x)+\left\langle \psi_\mathcal{X}^{x}, y-x\right\rangle+\frac{\sigma_{q}}{q}\|x-y\|^{q} \quad   \forall x, y \in \text{dom} (\psi + \textbf{1}_X) = \mathcal{X}.
\vspace*{-0.3cm}
   \]
Minimizing both sides of previous inequality w.r.t. $y$, we get  our statement.
\end{proof}

\noindent Note that for $q = 2$ in \eqref{eq:unifConv}  we recover  the usual  definition of  a strongly convex function. Moreover,   \eqref{eq:probConv} for $q = 2$  is the main property  used when analyzing the   convergence behavior of first-order methods \cite{NecNes:16}. One important class of uniformly convex functions is given next (see e.g.,  \cite{Nes:19Inxt} for a proof). 

\begin{example}
	\label{expl:uc} 
	For  $q \geq 2$ let us consider the convex function $ \psi(x) = \frac{1}{q} \|  x - x_0\|^q, $ 
	where $x_0$ is given and $B = I_n$. Then, $\psi$ is uniformly convex of degree $q$ with $\sigma_q = 2^{2-q}$.  
\end{example}

\noindent For nonconvex functions we have a more general notion than uniform convexity, called  the Kurdyka-Lojasiewicz (KL) property, which captures a broad spectrum of the local geometries that a nonconvex function can have \cite{BolDan:07}.  

\begin{definition}
	\label{def:kl}
	\noindent A proper and lower semicontinuous  function $\psi: \Eb \to \bar{\mathbb{R}}$ satisfies the uniformized  \textit{Kurdyka-Lojasiewicz (KL)} property on a compact set $\Omega \subseteq \text{dom} \, \psi$ on which $\psi$ takes a constant value $  \psi_*$ if there exist $\delta, \epsilon >0$ such that   one has:
	\vspace*{-0.2cm}
		\begin{equation*}
		\kappa' (\psi(x) - \psi_*) \cdot S(x)  \geq 1  \quad   \forall x\!:  \mathrm{dist}(x, \Omega) \leq \delta, \;  \psi_* < \psi(x) < \psi_* + \epsilon,  
	\end{equation*}

\vspace{-0.1cm}

\noindent 	where $\kappa: [0,\epsilon] \to \mathbb{R}$ is a concave differentiable function satisfying $\kappa(0) = 0$ and $\kappa'>0$.   
\end{definition}     

\begin{example}
	\label{expl:kl} 
	The KL property  holds for a large class of functions including semi-algebraic functions (e.g., real polynomial functions), vector or matrix (semi)norms (e.g., $\|\cdot\|_p$, with $p \geq 0$ rational number), logarithm functions,  exponential functions and  uniformly convex functions,  see \cite{BolDan:07} for a comprehensive list. 
\end{example}

\noindent If $\psi$ is semi-algebraic,  there  exist $r >1$ and $\sigma_r>0$ such that  $ \kappa$ in Definition \ref{def:kl} is of the form  $\kappa (t) = \sigma_r^{\frac{1}{r}} \frac{r}{r-1} t^{\frac{r-1}{r}}$ \cite{BolDan:07}. In this case  the uniformized  KL property establishes the following local geometry of the nonconvex function $\psi$ around a compact set~$\Omega$:
\begin{equation}
	\label{eq:kl}
	\psi(x) - \psi_*  \leq \sigma_r  S(x)^r \quad   \forall x\!: \;  \text{dist}(x, \Omega) \leq \delta, \; \psi_* < \psi(x) < \psi_* + \epsilon.  
\end{equation}

\noindent If   $\psi$ is uniformly convex of degree $q\geq 2$,  then  from \eqref{eq:probConv} we get that $ \kappa$ in Definition \ref{def:kl} is of the form  $\kappa (t) = (q/\sigma_q)^{\frac{1}{q}}  (q-1)^{\frac{q-1}{q}} t^{\frac{1}{q}}$. Note that the relevant aspect of the KL property is when $\Omega$ is a subset of \textit{critical points} for $\psi$, i.e.,  $\Omega \subseteq \{x: 0 \in \partial \psi (x) \}$, since it is easy to establish the KL property when $\Omega$ is not related to critical points.

\noindent Finally, for a function $\psi$, we denote its sublevel set at a given $x_0 $ by:
$$
\mathcal{L}_\psi(x_0) = \{x \in \Eb:\; \psi(x) \leq \psi(x_0)\}.
$$

%%%%%%%%%%%%%%%%%%%%%%%%%%%%%%%%%%%%
%%%%%%%%%%%%%%%%%%%%%%%%%%%%%%%%%%%

\section{General higher-order majorization-minimization algorithms}
\label{sect:ghom}
In what follows, we study the following general nonsmooth  optimization problem:
\begin{equation} 
	\label{eq:optpb}
	\min_{x \in \mathcal{X}} f(x), 
\end{equation}
where $f:  \Eb \rightarrow \bar{\mathbb{R}}$ is a proper (possibly nonconvex) lower semicontinuous (lsc)  function  defined over   its domain   $\text{dom} f $ and $\mathcal{X} \subseteq \text{dom} f$ is a  closed convex set having nonempty  relative interior (this condition ensures validity of  the calculus rules for subdifferentiability of the function $f + \mathbf{1}_{\mathcal{X}}$).   Since $f$ is extended valued, it allows the inclusion of other constraints in addition to  $x \in \mathcal{X}$. We assume that a solution $x^* \in \mathcal{X} $ exists for problem \eqref{eq:optpb}, hence  the optimal value is finite and $f$ is bounded from below by some  $f^* > -\infty$.  In the convex case we consider $f^* = f(x^*)$ (the optimal value). Throughout the paper we assume that $f$ admits directional derivatives and the limiting subdifferential is nonempty at any point in $\mathcal{X}$, respectively. Then,  at a regular point of (local) minimum  $x^* \in \mathcal{X} $,  the first-order necessary optimality conditions for \eqref{eq:optpb} can be written as follows \cite{Roc:70}[Theorem 8.15]:
\begin{align} 
	\label{eq:optcond}
	Df(x^*)[x-x^*] \geq 0 \quad \forall x \in  \mathcal{X}   \quad \iff \quad 0 \in \partial f(x^*) + \mathcal{N}_{\mathcal{X}}(x^*).  
\end{align}
Moreover, when $\text{dom} \,  f =\mathcal{X}  = \Eb$ these conditions are equivalent to $0 \in \partial f(x^*)$.  When $f$ is convex function,  the conditions 	\eqref{eq:optcond}  are also  sufficient for $x^*$ to be a global minimum of function $f$ over the convex set  $\mathcal{X}$.  The  first-order optimality conditions  	\eqref{eq:optcond} are convenient for defining an approximate solution to problem 	\eqref{eq:optpb}.  A point  $\hat x \in \mathcal{X}$  satisfies the first-order optimality conditions of (local) minimum of function $f$ over the convex  set $\mathcal{X}$  with accuracy $\epsilon$ if: 
\begin{align*} 
\inf_{x \in  \mathcal{X}}	\frac{Df(\hat x)[x-\hat x]}{\|x - \hat x\|} \geq - \epsilon.  
\end{align*}
In our convergence analysis below we assume that the sublevel set  $\mathcal{L}_f(x_0)$ is bounded. Then, there exists $R > 0$ such that:
$$
\|x-x^* \| \leq R \qquad \forall x\in \mathcal{L}_f(x_0).
$$

\noindent These basic  assumptions are standard in the literature, see e.g.,  \cite{CarGou:11, cartis:2020, Nes:19, BriGar:17, NesPol:06}. The main approach we adopt in solving the optimization problem \eqref{eq:optpb} is to use a class of functions that approximates well the objective function $f$ but are easier to minimize. We call this class  \textit{higher-order surrogate}  functions. The main properties of our higher-order surrogate function are described  next: 

\begin{assumption}
	\label{def:sur}
For a	given  proper lower semicontinuous (lsc) function $f:  \Eb \rightarrow \bar{\mathbb{R}}$ and  a  closed convex set $ \mathcal{X} \subseteq \text{dom} f $, there exist  an integer $p \geq 1$  and a proper lsc function  $g(\cdot;x): \Eb  \rightarrow \bar{\mathbb{R}}$ at any $x \in \mathcal{X}$, called \textit{$p$ higher-order surrogate of $f$ at $x$},  having  $\text{dom} \, g(\cdot;x) =  \text{dom} f $ and  satisfying the  following properties:
	\begin{enumerate}
		\item[(i)] the surrogate function is bounded from below by  the original function 
		$$g(y;x) \geq f(y) \quad \forall y \in \mathcal{X}. $$
		\item[(ii)]  \red{ there exists an \textit{error function} $h(\cdot;x): \Eb  \rightarrow \bar{\mathbb{R}}$ with $\text{dom} \, h(\cdot;x)$ an open set,  $p$ times differentiable  and  having the $p$ derivative smooth (i.e., Lipschitz continuous) with Lipschitz constant~$L_p^h$ on the  set   $\mathcal{X} \subseteq \text{dom}\, h(\cdot;x)$,  whose restriction to the set $\mathcal{X}$ agrees with $g-f$ (i.e.,  $h(y;x)= g(y;x) - f(y)$ for all  $y \in \mathcal{X}$). }
		\item[(iii)] the $i$th directional derivatives  of the error function $h$ at any $x \in \mathcal{X}$ satisfy 
		$$D^i h(x;x)[\cdot] = 0 \quad \forall  i =0:p,$$ 
		where $i=0$ means that $h(x;x) = 0$, or equivalently $g(x;x) = f(x)$.
	\end{enumerate}
\end{assumption}

\noindent Note that \cite{Mai:15} provided  similar conditions as in Assumption 	\ref{def:sur}, but only for a first-order surrogate function and used it in the context of stochastic optimization. Next, we give several nontrivial examples of higher-order  surrogate functions. 
\begin{example}
\label{expl:8} 
\textit{(proximal functions).} For a general (possibly nonsmooth and nonconvex) function $f: \mathbb{E} \to \bar{\rset}$ and  a  closed convex set $\mathcal{X} \subseteq \text{dom} f $, one can consider  for any $M_p >0$ and integer $p \geq 1$, the following $p$ higher-order surrogate function:
\vspace*{-0.2cm}
$$ g(y;x) = f(y) + \frac{M_p}{(p+1)!} \| y-x \|^{p+1}   \quad \forall x,y \in \mathcal{X}.$$

\noindent This surrogate satisfies all the conditions of Assumption	\ref{def:sur}. Indeed,   $\text{dom} \, g(\cdot;x) =  \text{dom} f $ and  since $M_p\geq 0$, the first property of the surrogate  is immediate. Moreover,  the error function $h(y;x) =  \frac{M_p}{(p+1)!} \| y-x \|^{p+1}$ has $\text{dom} \, h(\cdot;x) = \Eb$ and satisfies $h=g-f$ on any set $\mathcal{X} \subseteq  \text{dom}\, f  \subseteq  \text{dom}\, h(\cdot;x)$.  Next, according to Example \ref{expl:1}, the error function, $ h(y;x) = \frac{M_p}{(p+1)!} \| y-x \|^{p+1}$, has the  $p$ derivative Lipschitz continuous with constant $L_{p}^{h}=M_p$. For the last property of a surrogate, we notice that the $i$ derivative $\nabla^i( \| y-x \|^{p+1})_{|_{\substack{y=x}}} = 0$  for all $i =0:p$. Thus,  $D^i h(x;x) [\cdot]= 0 \;\;  \forall i =0:p$.  Note that  the  function $h$ is always convex w.r.t. $y$, while the surrogate function $g$ is convex only when the objective function $f$ is convex.   
\end{example}

%------------------------------

\begin{example}
	\label{expl:88} 
	\textit{(hessian aware proximal functions).}  \red{For a  nonconvex function $f: \mathbb{E} \to \rset$ that is twice differentiable  and  with the second derivative Lipschitz continous with constant $L_2^f$ on the full domain $\mathbb{E}$  and a  closed convex set $\mathcal{X} \subseteq \mathbb{E}$, one can consider  for any $ M_2 \geq  L_2^f$ the following second-order surrogate function:
\begin{align}
	\label{sur:pp}
	 g(y;x) =f(y) -   \frac{1}{2}  (y-x)^T \nabla^2_{-}f(x)(y-x) +  \frac{M_2}{3} \| y-x \|^{3}   \quad \forall x,y \in \mathcal{X}, 
\end{align}
where $\nabla^2_{-}f(x)$ denotes the negative part of the Hessian of $f$ at $x$\footnote{For a symmetric matrix $A$, with both positive and negative eigenvalues, the $LDU$ factorization leads to $U=L^T$  and $D$ diagonal which can be separated into the positive portion $D_+$ and the negative portion $D_{-}$ (they have all positive or all negative eigenvalues and zeros). Then, $A = L (D_+ + D_{-}) L^T$ and the negative part is $A_{-} = L  D_{-} L^T $.}. First,  one should notice that this surrogate is always a convex function in  $y$, even if  $f$  non-convex. Indeed, since  $f$  has the Hessian Lipschitz continuous, from \eqref{eq:TayAppG2} we have:
\begin{align*}
\nabla^2_y g(y;x) &= \nabla^2 f(y) - \nabla^2_{-} f(x) + M_2\left( \|y-x\| B + \frac{B(y-x)(y-x)^T B}{\|y-x\|}\right) \\
	& \succeq  \nabla^2 f(y) - \nabla^2 f(x) + M_2\left( \|y-x\| B + \frac{B(y-x)(y-x)^T B}{\|y-x\|}\right) \\
	&\overset{\eqref{eq:TayAppG2}}{\succeq} -L_2^f \|y-x\|B + M\left( \|y-x\| B + \frac{B(y-x)(y-x)^T B}{\|y-x\|}\right) \\
	&= (M_2-L_2^f) \|y-x\| B + M_2 \frac{B(y-x)(y-x)^T B}{\|y-x\|} \succeq 0 \quad \forall x, y \in \mathbb{E}.
\end{align*} 
Hence, compared to the surrogate of Example \ref{expl:8}, the second-order surrogate  	\eqref{sur:pp} is always convex  for any fixed $ x \in \mathbb{E}$ (although $f$ is nonconvex). In fact,  one can easily notice that this surrogate is uniformly convex  (if $M_2> L_2^f$) and has the Hessian Lipschitz continuous. Hence, it can be minimized  by a   gradient type method that has a linear rate of convergence depending only on absolute constants, see  e.g.,  \cite{Nes:19,Nes:19Inxt}. }
 
\medskip 

\noindent \red{ Moreover, this surrogate 	\eqref{sur:pp} satisfies all the conditions of Assumption 	\ref{def:sur}.  Indeed, we have $\text{dom} \, g(\cdot;x) =  \text{dom} f = \mathbb{E}$ and since for any given $x \in \mathbb{E}$ we have  $ - \nabla^2_{-} f(x) \succeq 0$ and $M_2 \geq 0$, it follows that $g(y;x) \geq f(y)$ for any $y \in \mathbb{E}$, i.e., the first property of the surrogate holds. Moreover,   the error function $h(y;x) = -   \frac{1}{2}  (y-x)^T \nabla^2_{-}f(x)(y-x) +  \frac{M_2}{3} \| y-x \|^{3} $ has $\text{dom} \, h(\cdot;x) = \mathbb{E} $,  satisfies  $h=g-f$  on any set $\mathcal{X} \subseteq \Eb$  (including  $\mathcal{X}  = \mathbb{E}$) and   is always convex w.r.t. $y$. Additonally,   this surrogate has the gradient Lipschitz continuous  with Lipschitz constant $L_1^h = \max_{x,y \in \mathcal{L}_f(x_0) \cap \mathcal{X}}  \|   - \nabla^2_{-} f(x) + M_2( \|y-x\| B + B(y-x)(y-x)^T B \|y-x\|^{-1})  \|$ (recall that the level set $\mathcal{L}_f(x_0)$ is assumed bounded).  Finally, we have  $g(x;x) = f(x)$ and $\nabla g(x;x) = \nabla f(x)$ for all $x \in \mathbb{E}$. Hence, according to our GHOM algorithm defined below, this surrogate  provides one of the first  implementable higher-order proximal point type algorithms for  minimizing twice differentiable  smooth nonconvex functions that enjoys convergence guarantees. }

\vspace{-0.1cm}

\noindent  \red{ For a convex function $f$ we can change the previous surrogate as follows:
$$ g(y;x) \!=\! f(y) -   \frac{1}{2}  (y-x)^T \nabla^2f(x)(y-x) +  \frac{M_0}{2} \| y-x \|^{2}  +  \frac{M_2}{3} \| y-x \|^{3}   \;\; \forall x,y  \!\in\!  \mathcal{X}, $$
 where  $M_2 \geq  L_2^f$ and $M_0 \geq 0$. Following the same analysis as before, one can easily show that  this surrogate is  convex (even for $M_0=0$) and satisfies all the conditions of Assumption 	\ref{def:sur} (however, in order to ensure $g(y;x) \geq f(y)$ we need $M_0 \geq \| \nabla^2f(x)\| $).  Of course, cubic Newton is  an efficient  scheme for minimizing  functions with second derivative smooth. However, GHOM algorithm based on the previous surrogates leads to  proximal point type schemes which usually yields larger steps (i.e., larger decrease in the objective) than cubic Newton scheme. For example, if $T_\text{CN}$ is the cubic Newton step   and $T_\text{PP}$ is the proximal point step from $x$, we have:
 \begin{align*}
 & f(T_\text{CN})  \leq f(x) - \frac{\bar M_2}{12}\|  T_\text{CN} - x\|^3  \quad (\text{see} \;  [22])	\\
& f(T_\text{PP}) \leq f(x) - \frac{ M_2}{3}\|  T_\text{PP} - x\|^3 - \frac{1}{2}  (T_\text{PP} -x)^T  \left( M_0 B -  \nabla^2f(x)\right) 	(T_\text{PP} -x),
 \end{align*}	
respectively, where $\bar M_2, M_2 \geq  L_2^f$. The last inequality follows from $g(T_\text{PP};x) \leq g(x;x) = f(x) $ and note that $M_0 B -  \nabla^2f(x)\succeq 0$ if $M_0 \geq \| \nabla^2f(x)\| $. A similar decrease  is also valid  for the surrogate  	\eqref{sur:pp}.   
}
\end{example}	

%---------------------------------------------

\begin{example}
	\label{expl:4}
	\textit{(smooth derivative functions).}  For a  function $f: \mathbb{E} \to \rset$ that is $p \geq 1$ times differentiable  and  with the $p$ derivative Lipschitz on the full domain $\mathbb{E}$ of constant $L_p^f$ and a  closed convex set $\mathcal{X} \subseteq \mathbb{E}$, one can consider  for any $ M_p \geq  L_p^f$ the following $p$ higher-order surrogate function over $\mathcal{X}$:
	$$ g(y;x) =T_p^f(y;x) + \frac{M_p}{(p+1)!} \| y-x \|^{p+1}   \quad \forall x,y \in \mathcal{X}. $$
	
	\vspace{-0.2cm}

\noindent  This higher-order surrogate also satisfies all the conditions of Assumption 	\ref{def:sur}.  Indeed, since  $ M_p \geq  L_p^f$, from \eqref{eq:TayAppBound} we have $g(y;x) \geq f(y)$ for any $y$,  i.e., the first property of the surrogate holds. Moreover, the error function $h(y;x)=g(y;x)-f(y)$ has $\text{dom} \, h(\cdot;x)= \text{dom} \, g(\cdot;x) =  \text{dom} f  =  \mathbb{E}$ and $\nabla^p h(y;x) = \nabla^p f(x) - \nabla^p f(y) + \nabla^p(M_p/(p+1)! \, \|y-x\|^{p+1}) $. Hence, for $x$ fixed,  $h$ is obviously  $p$ smooth with  Lipschitz constant $L_p^h =  L_p^f + M_p $ and satisfies also the third property of a surrogate.  Note that if we choose  $M_p \geq  p L_p^f$, then from  \eqref{eq:TayAppG2} it follows that    the error function  $h$ is always  convex, while  the  higher-order surrogate  $g$  is convex  only if $f$ is convex  (see Lemma \ref{lema:conv}).  Note that in this example we can  consider any  convex set $\mathcal{X}  \subseteq  \mathbb{E}$.  
\end{example}

%------------------------------------------------------------------------

\begin{remark} 
	\label{rmk:01}
	If the function  $f$ additionally satisfies $f(y) \geq T_{p}^{f}(y;x)$ for all $y$, then we get for $h$ a better bound for the constant in the right hand side of \eqref{eq:TayAppBound} than $L_p^h = M_p + L_p^f$.  Indeed, using the third property of a  $p$ higher-order surrogate function, we have $T_p^h(y;x) =0$ for all $y$ and thus:
	\vspace*{-0.3cm}
	\begin{align*}
		|h(y;x) - T_p^h(y;x) | &\!= |g(y;x) - f(y)| =       %\left|T_p^f(y;x) + \frac{M_p}{(p+1)!} \|y-x\|^{p+1} \! -f(y) \right| \nonumber\\
		  \left|f(y) - T_p^f(y;x) - \frac{M_p}{(p+1)!} \|y-x\|^{p+1} \right|.     %|\label{eq:21}.
	\end{align*}

\vspace{-0.3cm}

\noindent We observe that the last term in the equation above is positive and due to the additional condition $f(y) \geq T_p^f(y;x)$, we also have that $f(y)- T_p^f(y)\geq 0$. Hence, using  $|a-b | \leq \max\{a,\, b\}$ for any two positive scalars $a$ and $b$,  we further  get:
		\vspace*{-0.2cm}	
	\begin{align*}
		& |h(y;x) - T_p^h(y;x) |  \leq \max \left(f(y) - T_p^f(y;x), \,\frac{M_p}{(p+1)!} \|y-x\|^{p+1} \right) \\
		& \overset{\eqref{eq:TayAppBound}}{\leq}   \max \left( \frac{L_p^f}{(p+1)!} \| y-x\|^{p+1},    \,\frac{M_p}{(p+1)!} \|y-x\|^{p+1} \right) =  \frac{M_p}{(p+1)!} \|y-x\|^{p+1}.
	\end{align*}

\vspace{-0.2cm}

\noindent Therefore, in this case  we can use in \eqref{eq:TayAppBound} for function $h$ constant   $M_p$ instead of $L_p^h = M_p + L_p^f$.  Functions that satisfy the additional condition from Remark \ref{rmk:01}, are e.g., the convex functions  for $p=1$ (since in this case from convexity of $f$ we automatically have $f(y) \geq T_{1}^{f}(y;x)$).  
\end{remark}

%-----------------------------------------------------------------------	

\begin{example}
	\label{expl:5}
	\textit{(composite functions).} Let   $f_1: \Eb \to \rset$ have the  $p \geq 1$ derivative smooth on $\Eb$ with constant $L_p^{f_1}$, $f_2: \Eb \rightarrow \bar{\mathbb{R}}$ be a proper closed convex function with domain $\text{dom} f_2$ and $\mathcal{X} \subseteq \text{dom} f_2$ a closed convex set. Then, for the composite function  $f= f_1 +f_2$ one can take the following  $p$ higher-order surrogate:
	$$
	g(y;x) = T_p^{f_1}(y;x) +\frac{M_p}{(p+1)!} \|y -x\|^{p+1} +  f_2(y) \quad \forall x,y \in \mathcal{X},
	$$
\noindent 	where $M_p \geq  L_p^{f_1}$. This surrogate  satisfies all the conditions of Assumption 	\ref{def:sur}. Indeed, we observe that   $\mathcal{X} \subseteq \text{dom}f  = \text{dom} \, g(\cdot;x) = \text{dom}f_2$ and   the  error function $h(y;x)= T_p^{f_1}(y;x) +\frac{M_p}{(p+1)!} \|y -x\|^{p+1} -  f_1(y)$   has $\text{dom} \, h (\cdot;x)= \Eb$ (thus $\mathcal{X} \subseteq \text{dom}\, h(\cdot;x)$). Following the same arguments  as in Example  	\ref{expl:4} we obtain that  $h$ is  $p$ smooth with  Lipschitz constant $L_p^h =L_p^{f_1} + M_p$.  Additionally,  if we choose  $M_p \geq  p L_p^{f_1}$, then   the error function  $h$ is always  convex, while  the  higher-order surrogate function $g$  is convex  only if $f$ is convex.
\end{example}

%-----------------------------------------

\begin{example}
	\label{expl:888}
	\red{  \textit{(difference of convex functions).}   Let  the function $f_1: \Eb \rightarrow \bar{\mathbb{R}}$ be  proper closed and convex with domain $\text{dom} f_1$,  the function $f_2: \Eb \to \rset$ have the  $p_2\geq 1$ derivative smooth on $\Eb$ with constants $L_{p_2}^{f_2}$ and a convex set $\mathcal{X} \subseteq \text{dom} f_1$. Then, for the difference of convex functions  $f= f_1 - f_2$ one can take the following   higher-order surrogate having two diferent reguralization powers:
		$$
		g(y;x) = f_1(y) +\frac{M_{p_1}}{(p_1+1)!} \|y -x\|^{p_1+1} \!- T_{p_2}^{f_2}(y;x) +\frac{M_{p_2}}{(p_2+1)!} \|y -x\|^{p_2+1} \;\; \forall x,y \!\in\! \mathcal{X},
		$$
		\noindent 	where $M_{p_1} \geq  0,  M_{p_2} \geq  L_{p_2}^{f_2}$ and  $p_1$ satisfies $ 1 \leq p_1 \leq p_2$ and it is odd.  Using the same arguments as for the previous examples, one can easily see that this surrogate also satisfies all the conditions of Assumption	\ref{def:sur} for $p=p_2$.  In particular, one can notice that the error function $h(y;x) = \frac{M_{p_1}}{(p_1+1)!} \|y -x\|^{p_1+1} \!- T_{p_2}^{f_2}(y;x) +\frac{M_{p_2}}{(p_2+1)!} \|y -x\|^{p_2+1} + f_2(y)$ is  $p_2$ smooth as $\nabla^{p_2}(M_{p_1}/(p_1+1)! \, \|y-x\|^{p_1+1})$ is either constant  or linear, provided that $p_1$ is odd and $p_1 \leq p_2$.  Note that the surrogate corresponding to  $p_1=p_2=1$ is frequently employed in the  difference of convex functions literature. Moreover, if  $f_1$ is e.g., a simple function or a  convex quadratic function and $p_2=2$ one can easily minimize this surrogate  using  the same  techniques developed for cubic Newton method \cite{NesPol:06}. Hence, GHOM algorithm based on such surrogate is one of the first (implementable) schemes for minimizing difference of convex functions that  uses higher  information than of gradient type.    Other choices for the higher-order  surrogate are possible depending on the smoothness  of the functions $f_1$ and/or $f_2$.}
\end{example}

%---------------------------------------------------------------------------

\begin{example}
	\label{expl:6}
	\textit{(bounded  derivative functions).} 
	We consider a function $f: \Eb \rightarrow \mathbb{R}$ that is $p+1$ times continuously differentiable  and  a closed convex set $\mathcal{X} \subseteq \mathbb{E}$. Assume that the $p+1$ derivative of $f$ is upper bounded by a positive constant   $M_{p+1}$ on $\mathcal{X}$,  i.e.,  $\|  D^{p+1}f(x)\| \leq M_{p+1} \; \forall x \in \mathcal{X}$, or, if $p$ is  odd,  bounded by two $p+1$ symmetric multilinear forms $\mathcal{Q}_{p+1}, \mathcal{H}_{p+1}$ on $\mathcal{X}$, i.e.,  $\mathcal{Q}_{p+1}[h]^{p+1} \leq  D^{p+1}f(x)[h]^{p+1} \leq \mathcal{H}_{p+1}[h]^{p+1} \; \forall x \in \mathcal{X}, h \in \Eb$ (e.g., for $p=1$ this means that the Hessian is bounded on $\mathcal{X}$ by two symmetric matrices, not necessarily positive semidefinite, i.e., $f$ is a weakly convex function with smooth gradient).  Then, the $p$ derivative of $f$  is Lipschitz  continuous on $\mathcal{X}$ with $L_p^f = M_{p+1}$ or  $L_p^f = \max(\|\mathcal{H}_{p+1}\|, \|\mathcal{Q}_{p+1}\|)$, and then one has the following  $p$ higher-order surrogates:
	\vspace*{-0.2cm}
	$$
	g(y;x) = T_p^f(y;x) + \frac{\bar M_{p+1}}{(p+1)!} \|y-x\|^{p+1}  \; \text{or} \; =T_p^f(y;x) + \frac{ \mathcal{H}_{p+1}[y-x]^{p+1}}{(p+1)!}  \;  \forall x,y \in \mathcal{X},
	$$ 
	
	\vspace{-0.2cm}
	
\noindent where $\bar M_{p+1} \geq  M_{p+1}$.  One can easily see that these surrogates satisfy  Assumption 	\ref{def:sur}. The first property of the surrogate  follows  from Taylor's theorem  with Lagrange form of the remainder.  Moreover, the error function $h= g-f$ has $\text{dom} \, h(\cdot;x) = \text{dom} \, f = \mathbb{E}$ and is $p$ smooth on $\mathcal{X}$  with  Lipschitz constant   $L_p^h =  M_{p+1} + \bar M_{p+1}$ or  $L_p^h =  L_p^f + \|\mathcal{H}_{p+1}\|$.  \red{Note that the  functions considered  in  Example  \ref{expl:4} may be different from those covered here,  as one can find functions with the $p$ derivative Lipschitz continuous, but the $p+1$ derivative may not exist everywhere.}  Moreover, our algorithm GHOM defined below  based on the previous surrogates  with $p=1$ is  a  (projected) gradient or Netwon scheme  with constant Hessian.  
\end{example}

\noindent The reader can find other examples of  higher-order surrogate functions depending on the structure of the objective function $f$ in  \eqref{eq:optpb} and we believe that this paper opens a window of opportunity for higher-order algorithmic research.  

\medskip 

\noindent Now, we are ready to define our General Higher-Order Majorization-Minimization (GHOM) algorithmic framework.

\begin{algorithm}
	\caption{Algorithm  GHOM}
	\label{alg:buildtree}
	\begin{algorithmic}
		\STATE{Given $x_0 \in  \mathcal{X}$ and $p \geq 1$,  for $k\geq 0$ do:}
		\STATE{	1. Compute the $p$ surrogate function $g(y;x_k)$ of $f$ near $x_k$}
		\STATE{	2. Compute a stationary point $x_{k+1}$ of the subproblem: 
			\begin{equation}
				\label{eq:sp}
				\min_{y \in \mathcal{X}} g(y;x_k),
			\end{equation} 
			satisfying the  following descent property
			\begin{equation}
				\label{eq:descGhom}
				g(x_{k+1};x_k) \leq g(x_k;x_k) \quad \forall k \geq 0.
			\end{equation} 	
		}
	\end{algorithmic}
\end{algorithm}

\medskip 
\noindent   From Examples \ref{expl:8}, \ref{expl:88}, \ref{expl:4}, \ref{expl:5} and    \ref{expl:6}, one can notice that the algorithm GHOM is  simple and depends on at most one parameter, $M_p$, whose value can be determined through a line search procedure when e.g., the Lipschitz constant is unknown.  Moreover, GHOM generates $x_k \in \mathcal{X}$ for all $k \geq 0$.  If $f$ is convex function, by choosing the regularization parameter of the surrogate appropriately  we can also have $g(\cdot;x_k)$ convex function (see Lemma \ref{lema:conv}) and thus we can use efficient methods from  convex optimization to find the global solution $x_{k+1}$ of the convex subproblem  \eqref{eq:sp} at  each iteration, see e.g.,   \cite{CarGou:11, CarDuc:16, GraNes:19sub, Nes:19Inxt}.  When $f$ is nonconvex function, our convergence analysis below requires only the computation of a stationary point $x_{k+1}$ for the subproblem \eqref{eq:sp} (i.e., finding $x_{k+1} \in \mathcal{X}$ such that $0 \in \partial g(x_{k+1};x_k) + \mathcal{N}_{\mathcal{X}}(x_{k+1})$ or equivalently $D g(x_{k+1};{x}_{k})[y -x_{k+1}]  \geq 0$ for all $y \in \mathcal{X}$) satisfying, additionally,  the descent  \eqref{eq:descGhom}.  Moreover, when $\text{dom} \,  g =\mathcal{X}  = \Eb$ the previous  stationarity  conditions are reduced to $0 \in \partial g(x_{k+1};x_k)$. Note that the vast majority of nonconvex optimization algorithms are able to identify stationary points of nonconvex problems. 

%Moreover, in our convergence analysis below we can relax the stationary point condition, that is we can require $x_{k+1}$ to satisfy $\|  g^{x_{k+1}}  \| \leq \theta \| x_{k+1} - x_k \|^p$ for some $\theta >0$, where $g^{x_{k+1}}   \in \partial g(x_{k+1};x_k)$.  For simplicity of the exposition however, we asume below that $x_{k+1}$ is a stationary point of the subproblem \eqref{eq:sp}. 

%%%%%%%%%%%%%%%%%%%%%%%%%%%%%%
%%%%%%%%%%%%%%%%%%%%%%%%%%%%%%%

\section{ Convergence analysis  of GHOM for  convex optimization}
\label{sec:conv}
In this  section we analyze the  convergence of algorithm GHOM under various assumptions on the convexity of the objective function $f$ over the convex set  $\mathcal{X}$.  \red{Note that throughout this section,  where the function $f$ is considered  convex,    we assume that $x_{k+1}$ is  a global minimum of the  convex  subproblem \eqref{eq:sp}. We state this as an  assumption:
\begin{assumption}
\label{ass:opt}	
When the objective function $f$ is convex we assume that the higher-order surrogate $g(\cdot;x)$ is also a convex function for any $x \in \mathcal{X}$ and $x_{k+1}$ is  a global minimum of the  convex  subproblem \eqref{eq:sp}.
\end{assumption}		
}

\subsection{Global sublinear convergence  of GHOM}
\label{sec:conv-conv}
In this section we derive  the global rate of convergence of order $\mathcal{O}(1/k^p)$ for  GHOM in terms of function values when \eqref{eq:optpb} is a general convex  optimization problem.  Let  $x^* \in X$ be a  minimum point of this convex problem  \eqref{eq:optpb}.

\begin{theorem}
	\label{th:conv-conv}
Let the  objective function $f$ in the optimization problem \eqref{eq:optpb} be proper, lower semicontinuous and convex on  the closed convex set $\mathcal{X}$. \red{ Additionally,   let Assumptions \ref{def:sur}  and  \ref{ass:opt}	hold. }  Then,  the sequence $(x_k)_{k \geq 0}$  generated by algorithm GHOM has the following global sublinear convergence rate:
	\begin{align}
		\label{theq:conv-conv}
		f(x_{k}) - f(x^*) \leq \frac{L_p^h R^{p+1}}{p! \left(1+\frac{k}{p+1}\right)^{p}}.
	\end{align}
\end{theorem}

\begin{proof} Since  $h(y,x)$, defined as the  error between the $p$ higher-order surrogate $g(y;x)$ and  the function $f$, has the $p$ derivative  smooth with Lipschitz constant  $L_p^h$ , then from  \eqref{eq:TayAppBound} we get:
	$$
	h(y;x) \leq T_p^h(y;x) + \frac{L_p^h}{(p+1)!} \| y-x\|^{p+1} \quad \forall x,y \in \mathcal{X}.
	$$
	However, based on the condition (iii) from Assumption  \ref{def:sur} the Taylor approximation of $h$ of order $p$ at $x$ satisfies:
	$$
	T_p^h(y;x) = h(x;x)+\sum_{i=1}^{p} \frac{1}{i !} D^{i} h(x;x)[y-x]^{i} = 0.
	$$
	\noindent Therefore,  we obtain:
	\begin{align*} 
		h(y;x) = g(y;x) -f(y) &\leq   \frac{L_p^h}{(p+1)!} \| y-x\|^{p+1},
	\end{align*}
	which implies that
	\begin{align}\label{eq:5}
		g(y;x)  & \leq f(y) + \frac{L_p^h}{(p+1)!} \| y-x\|^{p+1} \quad \forall x,y \in \mathcal{X}.
	\end{align}
	Recall that  when $f$ is convex, we consider  $x_{k+1}$  to be a global minimum of $g(\cdot;x_k)$ over convex set   $\mathcal{X} \subseteq \text{dom} \, g(\cdot;x_k) =  \text{dom} f $ and since   $g(\cdot;x_k)$ is bounded from below by $f$ (see Assumption  \ref{def:sur} (i)),  we further get:
	\begin{align}
		\label{eq:main}
		f(x_{k+1}) & \leq g(x_{k+1};x_k) \!= \min_{y \in \mathcal{X}} g(y;x_k) \overset{\eqref{eq:5}}{\leq} \min_{y \in \mathcal{X}} (f(y) + \frac{L_p^h}{(p+1)!} \| y-x_k\|^{p+1}). 
	\end{align}		
	Since $f$ is assumed to be convex,  we can choose $ y = x_k + \alpha(x^* -x_k)$, with $\alpha \in [0,\, 1]$, and obtain further:
	\vspace*{-0.2cm}	
	\begin{align}
		 f(x_{k+1}) & \leq \min_{\alpha\in [0,\, 1]} f(x_k + \alpha(x^* -x_k)) + \frac{L_p^h}{(p+1)!} \| x_k + \alpha(x^* -x_k)-x_k\|^{p+1} \nonumber \\
		& \leq \min_{\alpha\in [0,\, 1]} f(x_k) - \alpha(f(x_k) - f(x^*)) + \frac{L_p^h}{(p+1)!}  \alpha^{p+1}\| x_k-x^*\|^{p+1}.  \label{eq:main0}
	\end{align}
	Let us  show that GHOM is a decent algorithm. Indeed, from \eqref{eq:descGhom} we have:
	\begin{equation*}
		f(x_k) = g(x_k;x_k) \geq   g(x_{k+1};x_{k}) \geq f(x_{k+1})\quad \forall k \geq 0.
	\end{equation*}
	Hence, all the iterates $x_k$ are in the level set $\mathcal{L}_f(x_0)$ and thus satisfy $\|x_k-x^* \| \leq R$ for all $k \geq 0$.  Subtracting the optimal value on both side of  \eqref{eq:main0} and recalling the fact that the sublevel set of $f$ at $x_0$ is assumed bounded, we obtain:
		\vspace*{-0.2cm}
	\begin{equation} \label{eq:6}
		f(x_{k+1}) -f(x^*) \leq \min_{\alpha\in [0,\, 1]} (1 - \alpha)(f(x_k) - f(x^*)) + \frac{L_p^h  R^{p+1}}{(p+1)!} \alpha^{p+1}. 
	\end{equation}
 For simplicity, we denote $\Delta_{k} = f(x_{k}) -f(x^*)$. We consider two cases in \eqref{eq:6} (see also \cite{Nes:19, Nes:19Inxt}):\\
	First case: if $\Delta_k > \frac{L_p^h R^{p+1}}{p!}$, then  optimal point   is $\alpha^*=1$ and  $\Delta_{k+1} \leq \frac{L_p^h  R^{p+1}}{(p+1)!}.$\\
	Second case: if $\Delta_k \leq \frac{L_p^h R^{p+1}}{p!}$, then optimal point is $\alpha^* = \sqrt[p]{\frac{\Delta_kp!}{R^{p+1} L_p^h}}$ and  obtain:
	\begin{equation*}
		\begin{aligned}
			\Delta_{k+1} &\leq \Delta_{k} \left(1-c \Delta_k^{\frac{1}{p}} \right), \quad 
			\Delta_{k+1}^{-\frac{1}{p}} \geq \Delta_k^{-\frac{1}{p}} \left( 1 - c \Delta_k^{\frac{1}{p}} \right)^{-\frac{1}{p}}, \\	
			\Delta_{k+1}^{-\frac{1}{p}} &\geq \Delta_k^{-\frac{1}{p}} \left( 1+ \frac{c}{p} \Delta_k^{\frac{1}{p}} \right) = \Delta_k^{-\frac{1}{p}} + \frac{c}{p},\\ 
		\end{aligned}
	\end{equation*} 

	\vspace{-0.2cm}

\noindent 	where $c = \frac{p}{p+1} \sqrt[p]{\frac{p!}{R^{p+1} L_p^h}}$ and the last inequality is given by $(1-x)^{-p} \geq 1+ px $ for $x \in [0,\, 1]$, see e.g.,  \cite{Pol:87}.  We now apply recursively the previous inequalities, starting with $k = 1$. If $\Delta_0\geq \frac{L_p^h R^{p+1}}{p!}$, we are in the first case and then $\Delta_1 \leq \frac{L_p^h  R^{p+1}}{(p+1)!}$. Then, we will subsequently be in the second case for all $k \geq 2$ and  
	$$\Delta_k \leq \Delta_1 \left(1+ \frac{(k-1)c}{p} \Delta_1^{\frac{1}{p}} \right)^{-p} \leq \frac{L_p^h R^{p+1}}{(p+1)!} \left( 1+\frac{k-1}{p+1} (p+1)^{\frac{1}{p}} \right)^{-p}. $$ Otherwise, if $\Delta_0\leq \frac{L_p^h R^{p+1}}{p!}$, then we are in the second case and obtain:
	$$\Delta_k \leq \Delta_0 \left( 1+ \frac{kc}{p} \Delta_0^{\frac{1}{p}} \right)^{-p} \leq \frac{L_p^h R^{p+1}}{p!} \left( 1+\frac{k}{p+1} \right)^{-p}. $$
	These prove the statement of the theorem.  
\end{proof}

\noindent  Note that  in the previous proof we have not used that the surrogate $g(\cdot;x_k)$ is a  convex function, we only used that $x_{k+1}$ is  a global minimum of the   subproblem \eqref{eq:sp}. Moreover,  for $\mathcal{X} = \Eb$ our convergence rate \eqref{theq:conv-conv} recovers the usual convergence rates $\mathcal{O}(1/k^p)$ of higher-order Taylor-based methods in the unconstrained convex  case \cite{Nes:19} (Example \ref{expl:4}) and composite convex  case \cite{Nes:19Inxt} (Example \ref{expl:5}), respectively.  However, from our best knowledge, other examples of surrogate functions (such as Examples \ref{expl:88}, \ref{expl:6}) have not been investigated in the literature yet.   Moreover, the  convergence  results from \cite{Nes:19, Nes:19Inxt} assume Lipschitz continuity of the $p$ derivative of the objective function $f$, while Theorem \ref{th:conv-conv}  assumes Lipschitz continuity of the $p$ derivative of the error function $h=g-f$. Hence, our proof is different from \cite{Nes:19, Nes:19Inxt}.  Therefore, Theorem \ref{th:conv-conv} provides  a unified convergence analysis for higher-order majorization-minimization algorithms, that covers in particular  unconstrained and composite convex  problems, under possibly more general assumptions than in  \cite{Nes:19, Nes:19Inxt}.  In fact there  is a major difference between the Taylor expansion approach from  \cite{Nes:19, Nes:19Inxt} and the model approximation based on our general majorization-minimization approach. Taylor expansion yields  unique approximation model around a given point, while in the majorization-minimization approach one may consider  many upper bound models and every model leads to a different optimization~method.

%%%%%%%%%%%%%%%%%%%%%%%%%%%

\subsection{Superlinear  convergence  of GHOM}
\label{subsection3.2}
\noindent Next, by assuming  uniform convexity of $f$ on $\text{dom}f$, we prove that GHOM can achieve faster rates. More precisely we get   local  \textit{superlinear} convergence rates for GHOM in several optimality  criteria:  function values, distance of the iterates to the optimal point  and norm of  subgradients.  For these derivations we first need some auxiliary results.  	

\begin{lemma}\label{lema:2}
Let the assumptions of Theorem \ref{th:conv-conv} hold. Then, for the sequence $(x_{k})_{k\geq 0}$  generated by  algorithm GHOM we have   $-  \nabla h(x_{k+1};x_k) \in \partial (f+\textbf{1}_\mathcal{X})  (x_{k+1}) $ and the following relation holds:
\begin{equation}
	\label{eq:26}
	\| \nabla h(x_{k+1};x_k)\|_* \leq \frac{L_p^h}{p!} \| x_{k+1} -x_k \|^p \quad \forall k \geq 0.
	\end{equation}
\end{lemma}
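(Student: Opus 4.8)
The plan is to read off the required subgradient from the optimality condition that defines $x_{k+1}$, and then to bound its norm using the fact that the first $p$ derivatives of the error function vanish at $x_k$.

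First I would recall that $x_{k+1}$ is produced in Step 2 of GHOM as a (global, in the convex case) minimizer, hence a stationary point, of the subproblem $\min_{y\in\text{dom}\,g}g(y;x_k)$, so that $0\in\partial g(x_{k+1};x_k)$. Writing $g(y;x_k)=f(y)+h(y;x_k)$, where by Definition \ref{def:sur}(ii) the error function $h(\cdot;x_k)$ is $p$ times continuously differentiable on a neighbourhood of $\text{dom}\,f$ (since $\text{dom}\,f\subseteq\text{int}(\text{dom}\,h)$), the sum rule for the subdifferential of a proper lower semicontinuous function plus a $C^1$ function yields $\partial g(x_{k+1};x_k)=\partial f(x_{k+1})+\nabla h(x_{k+1};x_k)$; note the normal cone of $\text{dom}\,f$ is already absorbed into $\partial f(x_{k+1})$ because $f$ (hence $g$) is $+\infty$ outside $\text{dom}\,f$. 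Consequently $-\nabla h(x_{k+1};x_k)\in\partial f(x_{k+1})$, and one sets $f^{x_{k+1}}:=-\nabla h(x_{k+1};x_k)$.

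It then remains to estimate $\|\nabla h(x_{k+1};x_k)\|_*$. By property (iii) of Definition \ref{def:sur} we have $\nabla^i h(x_k;x_k)=0$ for all $i=0{:}p$, so the Taylor model $T_p^h(\cdot;x_k)$ is identically zero and in particular $\nabla T_p^h(x_{k+1};x_k)=0$. For $p\ge 2$ I would apply inequality \eqref{eq:TayAppG1} to $\psi=h(\cdot;x_k)$ (whose $p$ derivative is Lipschitz with constant $L_p^h$ on $\text{dom}\,f$, and both $x_k,x_{k+1}\in\mathcal{L}_f(x_0)\subseteq\text{dom}\,f$), obtaining
\[
\|f^{x_{k+1}}\|_* = \|\nabla h(x_{k+1};x_k) - \nabla T_p^h(x_{k+1};x_k)\|_* \le \frac{L_p^h}{p!}\,\|x_{k+1}-x_k\|^p .
\]
For $p=1$ the same conclusion follows directly from Lipschitz continuity of the gradient $\nabla h(\cdot;x_k)$ together with $\nabla h(x_k;x_k)=0$, giving $\|f^{x_{k+1}}\|_*\le L_1^h\|x_{k+1}-x_k\|$, i.e. \eqref{eq:26} with $p=1$. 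This covers all $p\ge 1$.

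I expect the only genuinely delicate point to be the invocation of the subdifferential sum rule: one must be sure that peeling off the smooth term $h(\cdot;x_k)$ from the limiting subdifferential of the lower semicontinuous term $f$ is legitimate and that this identifies the \emph{same} element of $\partial f(x_{k+1})$ whose norm we control. This is a standard calculus rule for $C^1$ perturbations of (here even convex) lower semicontinuous functions, so no new argument is needed; everything else is a direct substitution into the Taylor-remainder bounds already established in \eqref{eq:TayAppG1}.
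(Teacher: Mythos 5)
Your proposal is correct and follows essentially the same route as the paper: identify $f^{x_{k+1}}=-\nabla h(x_{k+1};x_k)$ from $0\in\partial g(x_{k+1};x_k)$ and the differentiability of $h$, then bound its norm via the vanishing Taylor model $T_p^h(\cdot;x_k)\equiv 0$ and inequality \eqref{eq:TayAppG1}. Your explicit treatment of the $p=1$ case (where \eqref{eq:TayAppG1} as stated requires $p\ge 2$ but reduces to plain Lipschitz continuity of $\nabla h(\cdot;x_k)$ together with $\nabla h(x_k;x_k)=0$) is a small extra care the paper omits, but it does not change the argument.
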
	

\begin{proof} 
Since  $x_{k+1}$  is the global minimum of   the convex subproblem \eqref{eq:sp} (see Assumption \ref{ass:opt}), we have that there exists $g^{x_{k+1}} \in \partial g(x_{k+1};x_k)$ such that $ \langle g^{x_{k+1}} , y - x_{k+1} \rangle \geq 0$ for all $y \in \mathcal{X}$. Since the error function $h(y;x_k) = g(y;x_k) - f(y)$ is differentiable, we obtain from basic calculus rules  for convex functions  that  there  exists a subgradient  $f^{x_{k+1}} \in  \partial f(x_{k+1})$ such that  \cite{Roc:70}:
\[    f^{x_{k+1}} +  \nabla h(x_{k+1};x_k) =   g^{x_{k+1}}.   \]	

\noindent Since $f$ is convex function, we have:
\begin{align*}
	f(y) & \geq f(x_{k+1}) +  \langle f^{x_{k+1}} , y - x_{k+1} \rangle  =   f(x_{k+1}) +  \langle g^{x_{k+1}}  -  \nabla h(x_{k+1};x_k) , y - x_{k+1} \rangle   \\
	& \geq  f(x_{k+1}) +  \langle - \nabla h(x_{k+1};x_k) , y - x_{k+1} \rangle \quad \forall y \in  \mathcal{X}.
\end{align*}		
Since $f+\textbf{1}_\mathcal{X}$ is also convex and $x_{k+1} \in  \mathcal{X}$, from the previous relation we get  that $-  \nabla h(x_{k+1};x_k) \in \partial (f+\textbf{1}_\mathcal{X})  (x_{k+1}) $, which proves the first part of the lemma.  Further,	from the definition of the $p$ higher-order surrogate function, we know that the error function $h$ has the $p$ derivative Lipschitz continuous with constant $L_p^h$. Thus, we have the inequality \eqref{eq:TayAppG1} for $y =x_{k+1}$, that is:
	\begin{equation*}
		\|\nabla h(x_{k+1};x_k)  - \nabla T_p^h(x_{k+1}; x_k) \|_* \leq \frac{L_p^h}{p!} \|x_{k+1} - x_k \|^{p}.
	\end{equation*}
\noindent Since the Taylor approximation of $h$ at $x_k$, $T_p^h(y; x_k)$, is zero {for any $y \in \mathcal{X}$},  we get: 
	\begin{align}
		\label{eq:hfsubg}
		\|\nabla h(x_{k+1};x_k) \|_*  \leq \frac{L_p^h}{p!} \|x_{k+1} - x_k \|^{p},
	\end{align}
which proves the second part of the lemma. 	
\end{proof}

%--------------------------------------------------

\begin{lemma} \label{lema:1}
Let the assumptions of Theorem \ref{th:conv-conv} hold and additionally assume that the error function $h = g-f$ is convex on $\mathcal{X}$. Then, for the sequence $(x_{k})_{k\geq 0}$ generated by the algorithm GHOM,  the following relation holds:
	\begin{equation}\label{eq:27}
	\langle \nabla h(x_{k+1};x_k),  x_k -x_{k+1} \rangle \leq 0 \quad \forall k \geq 0.
	\end{equation}
\end{lemma}

\begin{proof}
The error function $h$ is $p \geq 2$ differentiable satisfying $h(y;x_k) \geq 0$ and $h(x_k;x_k) = 0$.  Combining these relations with the convexity of $h(\cdot;x_k)$, we have:
	\begin{align*}
		0  = h(x_k;x_k)  & \overset{ h \; \text{convex}}{\geq} h(x_{k+1};x_k) + \langle \nabla h(x_{k+1};x_k),  x_k -x_{k+1} \rangle \\
		& \overset{ h \geq 0}{\geq} \langle \nabla h(x_{k+1};x_k),  x_k -x_{k+1} \rangle,
	\end{align*}
 which yields our statement.
\end{proof}

\noindent Now we are ready to prove the local superlinear convergence of the GHOM algorithm in function values for general uniformly convex objective functions. 

\begin{theorem}
	\label{th:conv-conv_super0}
Let the  objective function $f$ in the optimization problem \eqref{eq:optpb} be proper, lower semicontinuous and  uniformly convex  of degree $q \geq 2$ with constant $\sigma_{q}$  on  the closed convex set $\mathcal{X}$.  Additionally,  \red{let Assumptions \ref{def:sur}  and  \ref{ass:opt}	hold} and assume that  the error function $h = g-f$ is convex on the convex set $\mathcal{X}$.  Then, the sequence $(x_k)_{k \geq 0}$ generated by algorithm GHOM has the following  convergence rate in function values:
	\begin{align}
		\label{theq:conv-conv_super0}
		f\left(x_{k+1}\right) - f(x^*) \leq (q - 1) q^{\frac{p-q+1}{q-1}}  \left(\frac{1}{\sigma_{q}}\right)^{\frac{p+1}{q-1}}   \left(\frac{L_{p}^h}{p !}\right)^{\frac{q}{q-1}}  \left(f\left(x_{k}\right) - f(x^*) \right)^{\frac{p}{q-1}}.
	\end{align}
Additionally, $\|  x_{k+1} - x_k\| $ converges to zero, i.e.:
\begin{align}
\label{deltax}
\|  x_{k+1} - x_k\|  \leq \left( \frac{q}{\sigma_q} (f(x_k) - f(x^*))	\right)^{\frac{1}{q}}. 
\end{align}	 
\end{theorem}

\begin{proof}
From Lemma \ref{lema:2} we have $-  \nabla h(x_{k+1};x_k) \in \partial (f+\textbf{1}_\mathcal{X})  (x_{k+1}) $.  Recall that  $x^* \in \mathcal{X}$ denotes a minimum point of the convex problem 	\eqref{eq:optpb} and $x_k, x_{k+1} \in \mathcal{X}$. Then, using Lemma 	\ref{lema:uc}, 	for any $k \geq 0$  we have:
	\begin{align*}
		f(x_k) - f(x^*) &\geq f(x_{k}) - f(x_{k+1})  \\ & \overset{\text{Lemma}\, \ref{lema:uc}}{\geq} \langle -  \nabla h(x_{k+1};x_k), x_k-x_{k+1}\rangle+\frac{\sigma_{q}}{q}\|x_k-x_{k+1}\|^{q}\\
		& \overset{\eqref{eq:27}}{\geq} \frac{\sigma_{q}}{q}\|x_k-x_{k+1}\|^{q} \overset{\eqref{eq:26}}{\geq}\frac{\sigma_{q}}{q} \left(  \frac{p!}{L_p^h}\| \nabla h(x_{k+1};x_k) \|_* \right)^{\frac{q}{p}} \\
		& \overset{\eqref{eq:probConv}}{\geq}   \frac{\sigma_{q}}{q} \left(  \frac{p!}{L_p^h} \right)^{\frac{q}{p}} 
		\left(\frac{q \sigma_{q}^{\frac{1}{q-1}}}{q-1}\left(f\left(x_{k+1}\right)-f(x^*)\right)\right)^{\frac{q-1}{p}},
	\end{align*}
	which proves both  statements of the theorem.  
\end{proof}

\noindent Note that if $p>q-1$, then the sequence generated by GHOM has  local \textit{superlinear}  convergence in function values. Indeed,  from Theorem \ref{th:conv-conv} we have $f\left(x_{k}\right)-f(x^*) \to 0$ as $k \to \infty$ and consequently the right hand side  term   $\beta_k =  (q - 1) q^{\frac{p-q+1}{q-1}}  \left(\frac{1}{\sigma_{q}}\right)^{\frac{p+1}{q-1}}   \times \\ \left(\frac{L_{p}^h}{p !}\right)^{\frac{q}{q-1}}  \left(f\left(x_{k}\right) - f(x^*) \right)^{\frac{p}{q-1}-1}$	also converges to zero, provided that  $\frac{p}{q-1}>1$.  E.g., if $q = 2$ ($f$ strongly convex) and $p = 2$, then  the rate is quadratic. If $q = 2$ and $p = 3$, then the  rate  is cubic.  Next, we show local superlinear convergence for a sequence of subgradients of the  convex objective function in the problem	\eqref{eq:optpb}.

\begin{theorem}
	\label{th:conv-conv_super_grad} 
	Under the assumptions of Theorem \ref{th:conv-conv_super0}  the sequence of the norm of  subgradients  $-  \nabla h(x_{k};x_{k-1}) \in \partial (f+\textbf{1}_\mathcal{X})  (x_{k}) $ has the following  convergence rate:
\begin{align*}
	\| \nabla h(x_{k+1};x_k) \|_* \leq  \frac{L_p^h}{p!}   \left(  \frac{q}{\sigma_{q}}   \right)^{\frac{p}{q-1}}     \| \nabla h(x_{k};x_{k-1})\|_*^{\frac{p}{q-1}} \quad \forall k \geq 1.
\end{align*}
Additionally, the residual  of  the first-order optimality conditions for \eqref{eq:optpb} satisfies:
\begin{align*}
 \inf_{x \in \mathcal{X}}  \frac{Df(x_{k+1})[x - x_{k+1}]}{\|x - x_{k+1}\|} \geq  - \|   \nabla h(x_{k+1};x_{k}) \| \quad \forall k\geq 0.
\end{align*} 
\end{theorem}

\begin{proof} 
From Lemma  \ref{lema:2} we have	$-  \nabla h(x_{k};x_{k-1}) \in \partial (f+\textbf{1}_\mathcal{X})  (x_{k}) $ for all $k \geq 1$.  Since GHOM is a descent method and $f$ is uniformly convex, we further have:
	\begin{equation*}
		f(x_k) \overset{\eqref{eq:main}}{\geq} f(x_{k+1}) \overset{\eqref{eq:unifConv}}{\geq} f(x_k) + \left\langle -  \nabla h(x_{k};x_{k-1}), x_{k+1} - x_k  \right\rangle + \frac{\sigma_{q}}{q} \|  x_{k+1} - x_k \|^{q} \;\; \forall k \geq 1. 
	\end{equation*}	
Using the Cauchy-Schwarz inequality we further get: 	
	\begin{align*}
		0  & \geq   \left\langle -  \nabla h(x_{k};x_{k-1}), x_{k+1}  -  x_k  \right\rangle  +  \frac{\sigma_{q}}{q} \|  x_{k+1}  -  x_k \|^{q} \\
		&  \geq  - \|  \nabla h(x_{k};x_{k-1})\|_*  \| x_{k+1}  -  x_k \|  +  \frac{\sigma_{q}}{q} \|  x_{k+1}  -  x_k \|^{q}, 
	\end{align*}	
	or equivalently
	\begin{equation}
		\label{eq:gradrel1}
		\| \nabla h(x_{k};x_{k-1})\|_* \geq  \frac{\sigma_{q}}{q} \|  x_{k+1} - x_k \|^{q-1}  \quad \forall k \geq 0. 	
	\end{equation}	
 Using again {Lemma \ref{lema:2}}, we get:
	\begin{align*}
	 \| \nabla h(x_{k+1};x_k) \|_* \overset{\eqref{eq:26}}{\leq}  \frac{L_p^h}{p!} \| x_{k+1} -x_k \|^p   \overset{\eqref{eq:gradrel1}}{ \leq}   \frac{L_p^h}{p!}   \left(  \frac{q}{\sigma_{q}}   \right)^{\frac{p}{q-1}}     \| \nabla h(x_{k};x_{k-1})\|_*^{\frac{p}{q-1}}.
	\end{align*}
Since according to Theorem 	\ref{th:conv-conv_super0} we have   $\|  x_{k+1} - x_k\| $ converges to zero, the previous relation shows that the sequence of subgradients $-  \nabla h(x_{k+1};x_{k}) \in \partial (f+\textbf{1}_\mathcal{X})  (x_{k+1}) $ converges also to zero. This proves the first statement of the theorem. 

\medskip 

\noindent Further, since we assumed that $\mathcal{X} \subseteq \text{dom}\, f$  has nonempty  relative interior, then the following holds \cite{Roc:70}: $ \partial (f+\textbf{1}_\mathcal{X})  (x_{k+1}) =  \partial f  (x_{k+1})  + \partial \textbf{1}_\mathcal{X}  (x_{k+1})  $. Since  we have already proved that $-  \nabla h(x_{k+1};x_{k}) \in \partial (f+\textbf{1}_\mathcal{X})  (x_{k+1})$, it follows that $-  \nabla h(x_{k+1};x_{k}) \in   \partial f  (x_{k+1})  + \partial \textbf{1}_\mathcal{X}  (x_{k+1})  $ and thus  there exists $f^{x_{k+1}} \in \partial f(x_{k+1})$ such that:
\begin{align*}  
	\langle  f^{x_{k+1}}, x - x_{k+1}  \rangle & \geq   \langle   \nabla h(x_{k+1};x_{k}),  x_{k+1} - x  \rangle \\
	&  \geq - \|   \nabla h(x_{k+1};x_{k})  \|  \| x - x_{k+1} \|  \quad \forall x \in \mathcal{X}, k \geq 0,
\end{align*}
which yields the second statement of the theorem as $f$ is convex and $Df(x_{k+1})[x - x_{k+1}] = \max_{ \bar{f}^{x_{k+1}} \in \partial f(x_{k+1})}  \langle  \bar{f}^{x_{k+1}}, x - x_{k+1}  \rangle$, see \cite{Roc:70}.  
\end{proof}

\noindent From  Theorem \ref{th:conv-conv_super_grad} we conclude that for $\frac{p}{q-1}>1$ the sequence of  subgradients $-  \nabla h(x_{k+1};x_{k}) \in \partial (f+\textbf{1}_\mathcal{X})  (x_{k+1}) $  converges to zero at a local superlinear  rate and it is of the same  order as that for the residual of the function values.  

\medskip 

\noindent Our  superlinear  convergence results from this Section \ref{subsection3.2} have been derived under the  assumption that $h$ is convex.  Note that from Lemma 	\ref{lema:conv} we can see that   by choosing appropriately $M_p$  in Examples \ref{expl:8}, \ref{expl:88},  \ref{expl:4}, \ref{expl:5} and   $M_{p+1}$ in Example  \ref{expl:6}, we obtain that the error functions, $h$, in these examples  are indeed convex. However, if we remove the convexity assumption on $h$ we can still prove superlinear convergence for GHOM in function values, but the rate is slightly worse.  This result is stated~next. 

%------------------------------------------------------------------------

\begin{theorem}
\label{th:conv-conv_super}
Let the  objective function $f$ in the optimization problem \eqref{eq:optpb} be proper, lower semicontinuous and  uniformly convex  of degree $q \geq 2$ with constant $\sigma_{q}$  on  the closed convex set $\mathcal{X}$.  Additionally,  \red{let Assumptions \ref{def:sur}  and  \ref{ass:opt}	hold}.  Then, the sequence $(x_k)_{k \geq 0}$ generated by algorithm GHOM has the following  convergence rate in function values:
	\begin{align}
		\label{theq:conv-conv_super}
		f(x_{k+1})  - f(x^*) & \leq     \frac{L_p^h}{(p+1)!}  \left(  \frac{q}{\sigma_q} \right)^{\frac{p+1}{q}}  \!\! (f(x_k) - f(x^*))^{\frac{p+1}{q} }.
	\end{align}
\end{theorem}

\begin{proof}
	If $f$ is uniformly convex on the convex set $\mathcal{X}$, then it has a unique optimal point $x^* \in \mathcal{X}$. Moreover, from \eqref{eq:main}, we have:
	\begin{align}
		f(x_{k+1}) & \leq  \min_{y \in \mathcal{X}} (f(y) + \frac{L_p^h}{(p+1)!} \| y-x_k\|^{p+1})  \nonumber \\ 
		& \overset{y=x^*}{\leq}   f(x^*)    + \frac{L_p^h}{(p+1)!} \| x_k - x^*\|^{p+1}. 
		\label{eq:relatii1}
	\end{align}
Further, since $f$ is uniformly convex of degree $q$, then using 
	Lemma 	\ref{lema:uc} for $f$ and the fact that {$ \langle f^{x^*}, x_k - x^* \rangle \geq 0$},  that $x^*, x_k \in \mathcal{X} $ and that  $f^{x^*} \in \partial f(x^*)$, we~get: 
	\begin{align}
		\label{eq:relatii2}
		f(x_k) - f(x^*)   \geq \frac{\sigma_{q}}{q}\|x_k - x^*\|^{q}.  
	\end{align}
	Combining the inequalities \eqref{eq:relatii1} and \eqref{eq:relatii2}, we further obtain:
	\begin{align*}
		f(x_{k+1})  - f(x^*) & \leq \frac{L_p^h}{(p+1)!} \| x_k - x^*\|^{p+1} \leq   \frac{L_p^h}{(p+1)!} \left(  \frac{q}{\sigma_q} (f(x_k) - f(x^*) ) \right)^{\frac{p+1}{q}} \\
		&  =  \left(  \frac{L_p^h}{(p+1)!}  \left(  \frac{q}{\sigma_q} \right)^{\frac{p+1}{q}}  \!\! (f(x_k) - f(x^*) )^{\frac{p+1}{q} -1}  \right ) (f(x_k) - f(x^*)),
	\end{align*}
thus proving the statement of the theorem.
\end{proof}

\begin{remark}
	\noindent If $q<p+1$ in  \eqref{theq:conv-conv_super}, we have that $\beta_k =  \frac{L_p^h}{(p+1)!}  \left(  \frac{q}{\sigma_q} \right)^{\frac{p+1}{q}}   (f(x_k) - f(x^*) )^{\frac{p+1}{q} -1} $	converges to zero, since from Theorem \ref{th:conv-conv} we have $f\left(x_{k}\right)-f(x^*) \to 0$,  thus obtaining again  superlinear convergence in function values.  Finally,  using  the inequality  \eqref{eq:relatii2}  in the convergence rates \eqref{theq:conv-conv_super0} and \eqref{theq:conv-conv_super}, respectively,   we immediately  obtain   superlinear convergence also for   $\|x_k - x^*\|$.  Since the derivations are straightforward, we omit them.  
\end{remark}

%%%%%%%%%%%%%%%%%%%%%%%%%%%%%%%%%%%

\section{ Convergence analysis  of GHOM for  nonconvex optimization}
\label{sec:non-conv} 
In this  section we analyze the convergence behavior of algorithm GHOM for solving the nonconvex nonsmooth optimization problem  \eqref{eq:optpb}.  Hence, in this section we assume a general (possibly nonconvex) proper lsc  objective function $f: \Eb \to \bar{ \mathbb{R} }$ and a  closed convex set $\mathcal{X} \subseteq \text{dom} f$.  In these general settings,   we only assume that $x_{k+1}$ is  a stationary point of  subproblem \eqref{eq:sp} satisfying, additionally,   descent \eqref{eq:descGhom} (i.e.,  $x_{k+1}$ satisfies  $D g(x_{k+1};{x}_{k})[y -x_{k+1}]  \geq 0 \;\;  \forall y \in \mathcal{X}$ and  \eqref{eq:descGhom}). Let us  define the notion of asymptotic stationary points generated by GHOM algorithm for problem \eqref{eq:optpb}. 

\begin{definition} 
	Assume that  the directional derivatives of the function $f$ at any  $x \in \mathcal{X}$ along $y-x$ exist  for all $y \in \mathcal{X}$, denoted  $D f(x) [y-x] $. Then, a sequence $\left(x_{k}\right)_{k\geq 0}$ satisfies the \textit{asymptotic first-order optimality conditions}  for the nonconvex problem \eqref{eq:optpb} if the following holds:
	\[
	\liminf _{k \rightarrow+\infty} \inf _{x  \in \mathcal{X}} \frac{D f(x_{k}) [x-x_{k}]}{\left\|x-x_{k}\right\|} \geq 0.		\]
	\qed 
\end{definition}

\noindent Note that if $f$ is differentiable  on $\mathcal{X}= \mathbb{E}$, then the directional derivative $D f(x)[y - x] = \langle \nabla f(x), y - x \rangle$ and the asymptotic  first-order optimality conditions imply that the sequence $(\nabla f(x_k))_{k \geq 0}$ converges to $0$. 

\subsection{Global convergence of GHOM for  nonconvex optimization}
In this section we analyze  the global convergence behavior of  the iterates of GHOM in the general nonconvex settings. We first derive some auxiliary~result:
\begin{lemma}
	\label{lemma:nabla2}
	Let $\tilde h$ be a function $p \geq 2$ differentiable and with the $p$ derivative smooth with constant $L_p^{\tilde h}$. For any $x, y \in \text{dom} \, \tilde h$  and  scalar $M_p^{\tilde h} \geq L_p^{\tilde h}$ let us define:
	\[  H =  \nabla^2 \tilde h (x) +  \sum_{i=3}^p \frac{1}{(i-1)!} D^i \tilde h (x) [y-x]^{i-2} +  \frac{M_p^{\tilde h}}{p!}  \|y-x\|^{p-1} B.   \]	
	Then, we have the following bounds on the matrix $H$:
	\begin{align*}  
		\int_{0}^1  \nabla^2 & \tilde h (x + \tau(y-x))   d\tau  \preceq  H \\ 
		& \preceq \int_{0}^1 \left( \nabla^2 \tilde h (x + \tau(y-x)) +  \frac{M_p^{\tilde h} + L_p^{\tilde h}}{(p-1)!} \tau^{p-1} \|y-x\|^{p-1} B \right) \! d\tau. \end{align*}
\end{lemma}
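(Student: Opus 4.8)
My strategy is to observe that, apart from the regularization term, $H$ is exactly the average over the segment $[x,y]$ of the Hessian of the Taylor polynomial $T_p^{\tilde h}(\cdot\,;x)$, and then to compare this average with the average of the true Hessian $\nabla^2\tilde h$ using the remainder bound \eqref{eq:TayAppG2}. First I would differentiate $T_p^{\tilde h}(z;x)=\tilde h(x)+\sum_{i=1}^p\frac{1}{i!}\nabla^i\tilde h(x)[z-x]^i$ twice in $z$ to get $\nabla^2 T_p^{\tilde h}(z;x)=\sum_{i=2}^p\frac{1}{(i-2)!}\nabla^i\tilde h(x)[z-x]^{i-2}$; substituting $z=x+\tau(y-x)$, using $[z-x]^{i-2}=\tau^{i-2}[y-x]^{i-2}$ together with $\int_0^1\tau^{i-2}\,d\tau=\frac{1}{i-1}$, this gives $\int_0^1\nabla^2 T_p^{\tilde h}(x+\tau(y-x);x)\,d\tau=\nabla^2\tilde h(x)+\sum_{i=3}^p\frac{1}{(i-1)!}\nabla^i\tilde h(x)[y-x]^{i-2}$, so that $H=\int_0^1\nabla^2 T_p^{\tilde h}(x+\tau(y-x);x)\,d\tau+\frac{M_p^{\tilde h}}{p!}\|y-x\|^{p-1}D$.

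Next I would fix $\tau\in[0,1]$ and note that the point $x+\tau(y-x)$ lies in $\text{dom} f$ (which is convex) and satisfies $\|(x+\tau(y-x))-x\|=\tau\|y-x\|$; hence \eqref{eq:TayAppG2} applied to $\tilde h$ with base point $x$ yields $\|\nabla^2\tilde h(x+\tau(y-x))-\nabla^2 T_p^{\tilde h}(x+\tau(y-x);x)\|\leq\frac{L_p^{\tilde h}}{(p-1)!}\tau^{p-1}\|y-x\|^{p-1}$. Since for a symmetric operator $A:\Eb\to\Eb^*$ the estimate $\|A\|\leq c$ is equivalent to $-cD\preceq A\preceq cD$, this rewrites as the two-sided Loewner bound $-\frac{L_p^{\tilde h}}{(p-1)!}\tau^{p-1}\|y-x\|^{p-1}D\preceq\nabla^2\tilde h(x+\tau(y-x))-\nabla^2 T_p^{\tilde h}(x+\tau(y-x);x)\preceq\frac{L_p^{\tilde h}}{(p-1)!}\tau^{p-1}\|y-x\|^{p-1}D$.

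Finally I would integrate these two inequalities over $\tau\in[0,1]$, using $\int_0^1\tau^{p-1}\,d\tau=\frac1p$, and substitute the formula for $H$ from the first step. Integrating the right-hand inequality gives $\int_0^1\nabla^2 T_p^{\tilde h}(x+\tau(y-x);x)\,d\tau\preceq\int_0^1\nabla^2\tilde h(x+\tau(y-x))\,d\tau+\frac{L_p^{\tilde h}}{p!}\|y-x\|^{p-1}D$, whence $H\preceq\int_0^1\nabla^2\tilde h(x+\tau(y-x))\,d\tau+\frac{M_p^{\tilde h}+L_p^{\tilde h}}{p!}\|y-x\|^{p-1}D$; rewriting $\frac{M_p^{\tilde h}+L_p^{\tilde h}}{p!}\|y-x\|^{p-1}D=\int_0^1\frac{M_p^{\tilde h}+L_p^{\tilde h}}{(p-1)!}\tau^{p-1}\|y-x\|^{p-1}D\,d\tau$ yields the stated upper bound. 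Integrating the left-hand inequality gives $\int_0^1\nabla^2\tilde h(x+\tau(y-x))\,d\tau\preceq\int_0^1\nabla^2 T_p^{\tilde h}(x+\tau(y-x);x)\,d\tau+\frac{L_p^{\tilde h}}{p!}\|y-x\|^{p-1}D=H+\frac{L_p^{\tilde h}-M_p^{\tilde h}}{p!}\|y-x\|^{p-1}D$, and since $M_p^{\tilde h}\geq L_p^{\tilde h}$ the last term is $\preceq 0$, which gives the stated lower bound $\int_0^1\nabla^2\tilde h(x+\tau(y-x))\,d\tau\preceq H$.

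The only steps requiring care are the passage from the operator-norm remainder bound \eqref{eq:TayAppG2} to the Loewner ordering, and verifying that every evaluation point stays in $\text{dom} f$ (which is exactly where convexity of $\text{dom} f$ is used, ensuring \eqref{eq:TayAppG2} applies on the segment); the term-by-term Hessian computation and the scalar integrals are routine. I do not anticipate a genuine obstacle here — the lemma is, in essence, a structural identity for $H$ combined with the standard second-order Taylor remainder estimate.
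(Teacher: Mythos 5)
Your proposal is correct and follows essentially the same route as the paper's proof: both rewrite $H$ as $\int_0^1 \nabla^2 T_p^{\tilde h}(x+\tau(y-x);x)\,d\tau$ plus the regularization term expressed as an integral, apply \eqref{eq:TayAppG2} pointwise in $\tau$ as a two-sided Loewner bound, and integrate, using $M_p^{\tilde h}\geq L_p^{\tilde h}$ and $D\succeq 0$ for the lower bound. Your explicit justification of the passage from the operator-norm bound to the Loewner ordering is a detail the paper leaves implicit, but the argument is the same.
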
	

\begin{proof}
	We note that:
	\begin{align*}
		H & =   \int_{0}^1 \left(   \nabla^2 \tilde h (x)  + \sum_{i=3}^p \frac{\tau^{i-2}}{(i-2)!} D^i \tilde h (x) [y-x]^{i-2} +  \frac{M_p^{\tilde h}  \tau^{p-1}}{(p-1)!}  \|y-x\|^{p-1} B \right) \! d\tau  \\
		&  =	 \int_{0}^1 \left(   \nabla^2 T_p^{\tilde h} (x + \tau(y-x);x) +  \frac{M_p^{\tilde h}  \tau^{p-1}}{(p-1)!}  \|y-x\|^{p-1} B \right) \! d\tau \\
		& \overset{\eqref{eq:TayAppG2}}{\preceq}  \int_{0}^1 \left(   \nabla^2 \tilde h (x + \tau(y-x)) +   \frac{M_p^{\tilde h} + L_p^{\tilde h}}{(p-1)!} \tau^{p-1} \|y-x\|^{p-1} B \right) \! d\tau.
	\end{align*}	 	
	Similarly, we have:
	\begin{align*}
		H  & \!\!\overset{\eqref{eq:TayAppG2}}{\succeq}  \!\! \int_{0}^1 \!\!  \left(\!\!   \nabla^2 \tilde h (x \!+\! \tau(y\!-\!x)) \!+\!   \frac{M_p^{\tilde h} \!-\! L_p^{\tilde h}}{(p-1)!} \tau^{p-1} \|y-x\|^{p-1} B \right) \! d\tau \! \succeq\!  \int_{0}^1   \!\!\nabla^2 \tilde h (x \!+\! \tau(y\!-\!x)) d\tau,
	\end{align*}	
	since  $M_p^{\tilde h} \geq L_p^{\tilde h}$ and $B \succeq 0$. These conclude our statement. 
\end{proof}

%%%%%%%%%%%%%%%%%%%%

\noindent Next theorem shows that the sequence generated by GHOM  satisfies the  asymptotic first-order optimality conditions.

\begin{theorem}
\label{th:nonconv-gen_dom}
Let the (possibly   nonconvex) objective function $f$ in the optimization problem \eqref{eq:optpb} be proper, lower semicontinuous,  have directional derivates at any point  $x \in \mathcal{X}$ and satisfy Assumption \ref{def:sur} for a given  $p \geq 1$.  Then, the sequence $\left(x_{k}\right)_{k \geq 0}$  generated by GHOM satisfies the  asymptotic first-order optimality conditions and $\left(f(x_{k})\right)_{k \geq 0}$ monotonically decreases.
\end{theorem}
\begin{proof}
	Using the properties of the surrogate (see  Assumption \ref{def:sur}) and the descent property \eqref{eq:descGhom}, we obtain:
	\begin{equation*}
		f(x_k) = g(x_k;x_k) \geq g(x_{k+1};x_{k}) \geq f(x_{k+1})\quad \forall k \geq 0.
	\end{equation*}
	This relation guarantees that $(f(x_k))_{k>0}$ is nonincreasing sequence and thus convergent, since $f$ is assumed to be bounded from below by $f^*$. Moreover, since we also assume the level set ${\cal L}_f(x_0)$  bounded (see Section \ref{sect:ghom}), then it follows that the sequence generated by GHOM $(x_k)_{k\geq 0} \subset {\cal L}_f(x_0)$   is also bounded. Using further  the definition of the error function $h$:
	\begin{align}
		\label{eq: hf_dom}
		0 \leq h(x_{k+1};x_k) = g(x_{k+1};x_k) -f(x_{k+1}) \leq f(x_k) -f(x_{k+1}).
	\end{align}
	Telescoping the previous relation for $k = 0:\infty$, we get:
	\begin{equation*}
		0 \leq \sum_{k = 0}^{\infty} h(x_{k+1};x_k) \leq f(x_0) -f^* < \infty.
	\end{equation*}
	Therefore, the positive term of the series, $(h(x_{k+1}; x_k))_{k \geq 0}$, necessarily converges to 0. Since $g = h+ f$, it also follows that  the sequence $(g(x_{k+1}; x_k))_{k \geq 0}$ converges to the same limit as the sequence $(f(x_k))_{k>0}$. 

\medskip 
		
\noindent For proving the asymptotic  first-order optimality conditions we consider two cases: $p \geq 2$ and $p=1$.   First, for  $p \geq 2$,  from the optimality of $x_{k+1}$ we have that:
$$
D g(x_{k+1};{x}_{k})[y -x_{k+1}]  \geq 0 \quad \forall y \in \mathcal{X}.
$$
Further, from the definition of the error function $h= g-f$, its differentiability  and the fact that $f$ is assumed to have directional derivatives at any point $x \in \mathcal{X}$, we have from  calculus rules that \cite{Roc:70}[Theorem 10.1]:
\begin{align*}
	D f(x_{k+1})&[y-x_{k+1}]   = D g(x_{k+1}; {x}_{k})[y-x_{k+1}] - D h(x_{k+1}; {x}_{k})[y-x_{k+1}] \\
	& \geq - D h(x_{k+1}; {x}_{k})[y-x_{k+1}]  = - \langle  \nabla h(x_{k+1}; {x}_{k}),  y- x_{k+1} \rangle \quad  \forall y \in  \mathcal{X}. 
\end{align*}
Using the Cauchy-Schwarz inequality in the previous relation, we get:
\begin{equation}\label{eq:25_domf}
	D f(x_{k+1})[y-x_{k+1}] \geq -\| \nabla h(x_{k+1}; {x}_k)\|_* \, \|y- x_{k+1} \|\quad \forall y \in  \mathcal{X}.
\end{equation}
According to Assumption \ref{def:sur},  the error function $ h(\cdot;x_k)$ has the $p$ derivative smooth with Lipschitz constant $ L_p^h$ on $\mathcal{X}$.		Further, let us consider the  following auxiliary  point $y_{k+1}$  (note that in practice we do not need to compute $y_{k+1}$):  
$$y_{k+1} = \argmin_{y \in \text{dom} \,  h(\cdot;x_k)} T_p^{{h}}(y;x_{k+1}) + \frac{M_p^h}{(p+1)!} \|y-x_{k+1} \|^{p+1},$$
where  $M_p^h > L_p^h$ (recall our notation: $T_p^{{h}}(y;x_{k+1})$ means  the Taylor approximation of the function $ h(\cdot;x_k)$  around $x_{k+1}$ and evaluated at $y$, i.e.,  $T_p^{{h}}(y;x_{k+1}) = h(x_{k+1};x_k) + \sum_{i=1}^{p} \frac{1}{i !} D^{i} h(x_{k+1};x_k)[y-x_{k+1}]^{i} $).  Using the (global) optimality of $y^{k+1}$ (recall that  in Assumption  \ref{def:sur} we consider $\mathcal{X} \subseteq \text{dom} \,  h(\cdot;x_k)$), we have:
\begin{align*}
	&T_p^{{h}}(y_{k+1};x_{k+1}) + \frac{M_p^h}{(p+1)!} \| y_{k+1}-x_{k+1}\|^{p+1} \\ & \leq T_p^{{h}}(x_{k+1};x_{k+1}) + \frac{M_p^h}{(p+1)!} \| x_{k+1}-x_{k+1}\|^{p+1} \\
	&= h(x_{k+1};x_k) + \sum_{i=1}^{p} \frac{1}{i !} D^{i} h(x_{k+1};x_k)[x_{k+1}-x_{k+1}]^{i} = h(x_{k+1};x_k).
\end{align*} 
Moreover, writting explicitly the left term of the previous  inequality, we get:
\begin{align*}
	& h(x_{k+1};x_k) + \sum_{i=1}^{p} \frac{1}{i!} D^{i} h(x_{k+1};x_k)[y_{k+1}-x_{k+1}]^{i} + \frac{M_p^h}{(p+1)!} \| y_{k+1}-x_{k+1}\|^{p+1} \\ 
	& \leq  h(x_{k+1};x_k).
\end{align*} 
Thus, we have:
\begin{equation}\label{eq:24_domf}
	\sum_{i=1}^{p} \frac{1}{i!} D^{i} h(x_{k+1};x_k)[y_{k+1}-x_{k+1}]^{i} + \frac{M_p^h}{(p+1)!} \| y_{k+1}-x_{k+1}\|^{p+1} \leq 0.
\end{equation}
From relation \eqref{eq:TayAppBound} we also obtain:
\begin{equation*}
  h(y;x_k) \leq T_p^{{h}}(y;x_{k+1}) + \frac{L_p^h}{(p+1)!} \| y-x_{k+1}\|^{p+1} \quad \forall y  \in \text{dom} \,  h(\cdot;x_k).
\end{equation*}
We rewrite this relation for our chosen point $y_{k+1}$:
\begin{align*}
	& h(y_{k+1};x_k) \leq T_p^{{h}}(y_{k+1};x_{k+1}) + \frac{L_p^h}{(p+1)!} \| y_{k+1}-x_{k+1}\|^{p+1} \\
	& = h(x_{k+1};x_k) \!+\!  \sum_{i=1}^{p} \! \frac{1}{i!} D^{i} h(x_{k+1};x_k)[y_{k+1} \!-\! x_{k+1}]^{i}  \!+\!  \frac{L_p^h}{(p+1)!} \| y_{k+1}-x_{k+1}\|^{p+1} \\
	&\overset{\eqref{eq:24_domf}}{\leq}h(x_{k+1};x_k) - \frac{M_p^h -L_p^h}{(p+1)!} \| y_{k+1}-x_{k+1}\|^{p+1}.
\end{align*}
Recalling that $h$ is nonnegative, then it follows that:
\begin{align}
	\label{eq: hy_dom}
	0 \leq h(x_{k+1};x_k) - \frac{M_p^h -L_p^h}{(p+1)!} \| y_{k+1}-x_{k+1}\|^{p+1}.
\end{align}
This leads to:
\begin{equation*}
	h(x_{k+1};x_k) \geq  \frac{M_p^h -L_p^h}{(p+1)!} \| y_{k+1}-x_{k+1}\|^{p+1} \geq 0.
\end{equation*}		
Since $(h(x_{k+1}; x_k))_{k \geq 0}$ converges to 0, then necessarily $(y_{k+1} - x_{k+1})_{k \geq 0}$ converges to $0$ and is bounded, since $h$ is continuous and $(x_{k})_{k \geq 0}$ is bounded (recall that we assume that the sublevel set  $\mathcal{L}_f(x_0)$ is bounded). Consequently, the sequence  $(y_{k})_{k \geq 0}$ is also bounded. Moreover,  from the first-order necessary  optimality conditions for  $y_{k+1}$ (recall that according to Assumption \ref{def:sur}, $\text{dom} \, h (\cdot;x_k)$ is an open set), we have:
\begin{align}
	\label{eq:optcondy_domf}
	\nabla h(x_{k+1};x_{k}) + H_{k+1}[y_{k+1}-x_{k+1}] = 0,
\end{align}
where  we denote the matrix  
\begin{align}
	\label{eq:hk+1_dom}
	H_{k+1} & = \nabla^2 h(x_{k+1};x_k) + \sum_{i=3}^{p} \frac{1}{(i-1)!} D^i  h(x_{k+1};x_k)[y_{k+1}-x_{k+1}]^{i-2} \\ 
	& \qquad + \frac{M_p^h}{p!} \| y_{k+1} -x_{k+1}\|^{p-1} B.  \nonumber 
\end{align}
From Lemma \ref{lemma:nabla2},	 we have that:
	\begin{align}  
		\label{eq:Hyx_dom}	
		&	\int_{0}^1  \nabla^2  h (x_{k+1} + \tau(y_{k+1}-x_{k+1});x_k)   d\tau  \preceq  H_{k+1} \\ 
		& \preceq \int_{0}^1 \! \left( \nabla^2  h (x_{k+1} \!+\! \tau(y_{k+1} \!-\! x_{k+1});x_k) \!+\!  \frac{M_p^{h} \!+\! L_p^{h}}{(p-1)!} \tau^{p-1} \|y_{k+1} \!-\! x_{k+1}\|^{p-1} \!B \! \right) \! d\tau. \nonumber 
	\end{align}
	Since the  sequences $(x_{k})_{k>0}$  and $(y_{k})_{k>0}$ are  bounded and $h$ is $p \geq 2$ times continuously  differentiable, then $ \nabla^{2} h(x_{k+1} + \tau(y_{k+1}-x_{k+1}); x_k)$ is bounded for $\tau \in [0, 1]$.  Moreover,  $y_{k+1} -x_{k+1}  \rightarrow 0$ as $k \to \infty$ and it is bounded. Therefore, $H_{k+1}$ is bounded and  consequently from \eqref{eq:optcondy_domf} it follows that 
	\begin{align}
		\label{eq:gradh0_domf}	
		\nabla h(x_{k+1};x_{k}) \to 0  \; \text{as}  \; k \to \infty. 
	\end{align}  
	\noindent For the case $p=1$ we can just take $y_{k+1} = x_{k+1} - 1/L_1^h \nabla h (x_{k+1};x_k)$. Then, using that $h(\cdot;x_k)$ has gradient Lipschitz  with constant $L_1^h$ we obtain \cite{Nes:04}:
	\[   0 \leq h(y_{k+1};x_k)  \leq h(x_{k+1}; x_k) - \frac{1}{2 L_1^h} \| \nabla h(x_{k+1};x_k) \|^2_*, \]
	which further yields
	\[  \frac{1}{2 L_1^h} \| \nabla h(x_{k+1};x_k) \|^2_*  \leq h(x_{k+1};x_k) \to 0  \; \text{as}  \; k \to \infty, \]
	since we  have already  proved that  the sequence $h(x_{k+1};x_k)$ converges to zero. Therefore, also in the case $p=1$ we have  \eqref{eq:gradh0_domf} valid.  Finally, using  \eqref{eq:gradh0_domf} in \eqref{eq:25_domf},  minimizing over $y \in \mathcal{X}$ and taking the infimum limit,  we get that the sequence $\left(x_{k}\right)_{k \geq 0}$ satisfies the asymptotic  first-order optimality conditions for the nonconvex problem \eqref{eq:optpb}. This concludes our statements.  
\end{proof}

\noindent {Note that the main difficulty in the previous proof is to handle  $h$ having   $p \geq 2$ derivative smooth. We overcome  this difficulty by introducing a new sequence $(y_k)_{k \geq 0}$, proving that it has similar properties as the sequence  $(x_k)_{k \geq 0}$ generated by GHOM, and then  using the  optimality conditions for $y_k$ instead of $x_k$.  Note that in practice we do not need to compute the auxiliary sequence  $y_k$.}  Moreover, if $ \mathcal{X}  = \Eb$, the previous theorem states that the minimum norm sequence of  points from the limiting subdifferential  $(\partial f(x_{k}))_{k \geq 0}$,  denoted $S(x_k)$,   converges to $0$ as $k \to \infty$.

%%%%%%%%%%%%%%%%%%%%

\medskip 

\noindent   Obviously,  the previous theorem  does not yield  convergence rate for    the residual  of  the first-order optimality conditions.    Hence,  the requirements from  Assumption  \ref{def:sur} on the surrogate function $g$  do not seem to be reach enough to enable  convergence rate in the nonconex settings. More exactly, the bound $h(y;x) \geq 0$ for any $y \in \mathcal{X}$ is not sufficiently tight when analysing  convergence of  the residual of  the first-order optimality conditions.   However,   at a  closer look one can notice that the surrogate $g$ of Example   \ref{expl:4} induces the following inequality on the error function $h$:
\begin{align*}
	%\label{eq:unifh}
	h(y;x)  \!=\!   T_p^f(y;x) + \frac{M_p }{(p+1)!}   \|y - x\|^{p+1} - f(y)   \!\geq\!  \frac{M_p - L_p^f}{(p+1)!}   \|y - x\|^{p+1} \;\;  \forall x, y \in \mathcal{X},	
\end{align*}	
where  $M_p > L_p^f$.  In fact, using the same reasoning,  it is easy to see that such a relation holds for all the surrogate functions from Examples \ref{expl:8},  \ref{expl:88}, \ref{expl:4}, \ref{expl:5},  \ref{expl:888} and 	\ref{expl:6}.  Hence, let us additionally assume that  our surrogate function satisfies  the  following inequality in terms of the error function $h$ for some positive constant $C_p >0$: 
\begin{align}
	\label{eq:unifh_domf}
	C_p   \|y - x\|^{p+1} \leq 	h(y;x) \; \left( := g(y;x) - f(y) \right)   \quad \forall x,y \in  \mathcal{X}. 
\end{align}	
Hence,  we prove below that if we require additionally the condition \eqref{eq:unifh_domf}, we can get convergence rate  to a
point from which there is no descent direction, i.e.,   we can strengthen the results from Theorem~\ref{th:nonconv-gen_dom}.  

\begin{theorem}
\label{th:noncf1f2}
Let  the assumptions from Theorem \ref{th:nonconv-gen_dom} hold. Additionally, assume that our surrogate function satisfies the relation \eqref{eq:unifh_domf}. Then,  there exists an iteration index $i_* \in \{0,\cdots, k-1\}$ such that the following convergence rate in terms of first-order optimality conditions holds:
	\begin{equation*}
	\inf _{x  \in \mathcal{X}} \frac{D f(x_{i^*}) [x-x_{i^*}]}{\left\|x-x_{i^*}\right\|}     \geq  -  \frac{L_p^{h}}{p!}\left(\frac{(f(x_0) - f^*)}{C_p k }\right)^{\frac{p}{p+1}}. 
	\end{equation*}
\end{theorem}

\begin{proof}
Since the error function $h$ has the $p$ derivative Lipschitz, we have:
\[  \|\nabla h(x_{k+1};x_k)   -   \nabla T_p^h(x_{k+1};x_k) \|_* \leq \frac{L_p^h}{p!} \| x_{k+1} - x_k \|^p.  \]
Using that $\nabla T_p^h(x_{k+1};x_k) = 0$ (according to Assumption \ref{def:sur} (iii)), we get:
 \[  \|\nabla h(x_{k+1};x_k) \|_* \leq \frac{L_p^h}{p!} \| x_{k+1} - x_k \|^p.  \]
Combining this relation with  our assumptions, we further obtain:
\begin{align*}
& f(x_k)	- f(x_{k+1}) \overset{\eqref{eq: hf_dom}}{\geq} h(x_{k+1};x_k)    \overset{\eqref{eq:unifh_domf}}{\geq}  C_p   \|x_{k+1} - x_k\|^{p+1} \\	
& =   C_p   \left( \|x_{k+1} - x_k\|^{p} \right)^{\frac{p+1}{p}}  \geq   C_p   \left( \frac{p!}{L_p^h}   \|\nabla h(x_{k+1};x_k) \|_*  \right)^{\frac{p+1}{p}}. 
\end{align*}	
Telescoping from $i=0:k-1$ the previous inequality, we further get:
\begin{align*}
	f(x_{0}) -f^* &\!\geq C_p  \sum_{i=0}^{k-1} \left( \frac{p!}{L_p^h}   \|\nabla h(x_{i+1};x_i) \| _* \right)^{\frac{p+1}{p}}  \!\!\!=\!   C_p \left( \frac{p!}{L_p^h}   \right)^{\frac{p+1}{p}}  \sum_{i=0}^{k-1}  	 \|\nabla h(x_{i+1};x_i) \| _*^{\frac{p+1}{p}}\\
	&\geq k \cdot C_p \left( \frac{p!}{L_p^h}   \right)^{\frac{p+1}{p}}   \min_{i=0:k-1} \|\nabla h(x_{i+1};x_i) \| _*^{\frac{p+1}{p}}
\end{align*}
or equivalently 
\begin{align*}
 \min_{i=0:k-1} \|\nabla h(x_{i+1};x_i) \| _*  \leq  \frac{L_p^h}{p!} \left(  \frac{f(x_0) - f^*}{C_p k} \right)^{\frac{p}{p+1}}. 
\end{align*}
Let $i_* \in \{0,\cdots, k-1\}$ be the iteration index where the minimum in the previous relation is achieved. Then, combining the  previous relation with \eqref{eq:25_domf} and minimizing over $y \in  \mathcal{X}$,  we obtain our statement.   
\end{proof}	

\noindent It is known that for general nonconvex problems deciding whether a descent direction exists from a point is NP-hard \cite{Nes:13}.  However, for an adaptive regularization  algorithm (called AR$p$) on problems with smooth objective and simple constraints  \cite{cartis2017,cartis:2020}  or for a first-order algorithm on problems with composite structure \cite{Nes:13} similar convergence  rates as in Theorem 	\ref{th:noncf1f2} have been derived.  The results for GHOM algorithm   on global convergence  to a point from which there is no descent direction (including convergence rates) from Theorems 	\ref{th:nonconv-gen_dom} and  	\ref{th:noncf1f2} are more general  than the ones from literature as they cover more complicated objective functions (e.g., general composite models or nonsmooth objective functions) and many new surrogate functions (see Examples \ref{expl:8},  \ref{expl:88},   \ref{expl:5}, \ref{expl:888}  and \ref{expl:6}).  The only condition that we need is that the ojective function admits a higher-order surrogate (see Assumption  \ref{def:sur}).

%--------------------------------------------------------------------------	

\subsection{Local convergence of GHOM under the  KL property}
The KL property was first analyzed in details in \cite{BolDan:07} and then widely applied to analyze the convergence behavior of various first-order  \cite{AttBol:09,BolDan:07} and  second-order  \cite{FraGar:15,ZhoWan:18} algorithms  for nonconvex optimization. \textit{However, to the best of our knowledge there are no studies analyzing  the convergence rate of higher-order majorization-minimization algorithms under the KL property}. The main difficulty comes from the fact that $f$ satisfies the KL property while the error function $h$ is assumed smooth and it is hard to establish connections between them. In the next theorem we connect the geometric property of nonconvex function $f$ with the smooth property of $h$  through  the point $y_{k+1}$ and establish local convergence of GHOM in the full parameter regime  of the KL property.    Let us  denote the set of limit points of the sequence $(x_k)_{k \geq 0}$ generated by algorithm  GHOM with  $\Omega(x_0)$. 
\begin{lemma}
	\label{lemma:kl}
	Let  the assumptions from Theorem \ref{th:nonconv-gen_dom} hold and, additionally,  assume  that  $f$ is continuous.  Then,  $\Omega(x_0)$ is compact set and  $f$ is constant on $\Omega(x_0)$.
\end{lemma}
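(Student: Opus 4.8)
The plan is to treat the two assertions separately, relying on three facts already established in Theorem~\ref{th:nonconv-gen} and its proof: the sequence $(f(x_k))_{k\ge0}$ is nonincreasing, every iterate stays in the sublevel set $\mathcal{L}_f(x_0)$, and $h(x_{k+1};x_k)\to0$.

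For the compactness of $\Omega(x_0)$ I would argue in the standard way. Since $f$ is lower semicontinuous, $\mathcal{L}_f(x_0)=\{x:\ f(x)\le f(x_0)\}$ is closed, and it is bounded by assumption, hence compact; as $(x_k)_{k\ge0}\subset\mathcal{L}_f(x_0)$, the sequence has at least one cluster point, so $\Omega(x_0)\neq\emptyset$, and every cluster point, being a limit of points of the closed set $\mathcal{L}_f(x_0)$, lies in it, so $\Omega(x_0)\subseteq\mathcal{L}_f(x_0)$ is bounded. Writing $\Omega(x_0)=\bigcap_{n\ge0}\overline{\{x_k:\ k\ge n\}}$ exhibits it as an intersection of closed sets, hence $\Omega(x_0)$ is closed, and a closed subset of a compact set is compact.

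For the constancy of $f$ on $\Omega(x_0)$, since $(f(x_k))_{k\ge0}$ is nonincreasing and bounded below by $f^*$ it converges to some $\bar f\in\mathbb{R}$. Fix $x^*\in\Omega(x_0)$ and a subsequence $x_{k_j}\to x^*$; lower semicontinuity gives $f(x^*)\le\liminf_j f(x_{k_j})=\bar f$, so only $f(x^*)\ge\bar f$ is needed. If $f$ is continuous this is immediate, since $f(x^*)=\lim_j f(x_{k_j})=\bar f$. If instead $x_{k+1}$ minimizes the subproblem~\eqref{eq:sp}, I would compare with $x^*$: since $x_{k_j+1}$ minimizes $g(\cdot;x_{k_j})$ and $g=f+h$, one has $f(x_{k_j+1})+h(x_{k_j+1};x_{k_j})=g(x_{k_j+1};x_{k_j})\le g(x^*;x_{k_j})=f(x^*)+h(x^*;x_{k_j})$. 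Here $f(x_{k_j+1})\to\bar f$, the already established $h(x_{k+1};x_k)\to0$ disposes of the left error term, and the normalization $\nabla^i h(x;x)=0$ of Definition~\ref{def:sur}(iii) together with \eqref{eq:TayAppBound} gives $0\le h(x^*;x_{k_j})\le\frac{L_p^h}{(p+1)!}\|x^*-x_{k_j}\|^{p+1}\to0$; passing to the limit yields $\bar f\le f(x^*)$. Hence $f(x^*)=\bar f$; as $x^*\in\Omega(x_0)$ was arbitrary, $f$ is constant, equal to $\bar f$, on $\Omega(x_0)$.

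I expect the genuine obstacle to be the case in which $x_{k+1}$ is only a \emph{local} minimizer of~\eqref{eq:sp}. Then the comparison inequality $g(x_{k+1};x_k)\le g(y;x_k)$ holds only for $y$ in a neighbourhood of $x_{k+1}$ whose radius is not controlled, and $\|x_{k+1}-x_k\|$ is not known to tend to $0$ at this level of generality, so $y=x^*$ is not admissible. The fix is to draw the comparison point from the tail of the convergent subsequence and to control the perturbation that $h$ induces relative to $f$ near each iterate; this is where one exploits the structural feature of a higher-order surrogate, namely that the Lipschitz constant $L_p^h$ of the $p$-th derivative of the error function is independent of the base point, so $\nabla h(\cdot\,;x)$ and $\nabla^2 h(\cdot\,;x)$ are bounded locally, uniformly over the bounded sublevel set; when in addition the surrogate satisfies a lower bound of the form \eqref{eq:unifh}, one even recovers $\|x_{k+1}-x_k\|\to0$ and the argument of the previous paragraph applies verbatim with $x_{k_j+1}\to x^*$. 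The continuous-$f$ branch and the case of a global minimizer are otherwise routine.
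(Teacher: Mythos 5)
Your compactness argument and the continuous-$f$ branch coincide with the paper's and are fine (closed bounded sublevel set, $\Omega(x_0)$ as an intersection of closed sets, monotone convergence of $f(x_k)$, lower semicontinuity for one inequality). The genuine gap is exactly where you flag it: in the local-minimum branch you compare $g(x_{k_j+1};x_{k_j})\leq g(x^*;x_{k_j})$, which requires $x^*$ to lie in the neighbourhood of $x_{k_j+1}$ on which local minimality holds, whereas $x^*$ is only known to be close to $x_{k_j}$; since $\|x_{k_j+1}-x_{k_j}\|$ is not known to tend to zero, this comparison is not available. Neither of your proposed repairs closes it: the first (``draw the comparison point from the tail'') is not carried out, and the second imports the extra hypothesis \eqref{eq:unifh}, which is not among the assumptions of the lemma.

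The paper closes the gap by a one-index shift that you did not make: each $x_{k_j}$ of the convergent subsequence is \emph{itself} a local minimizer, namely of $g(\cdot;x_{k_j-1})$, so the neighbourhood of local minimality is centred at $x_{k_j}\to x_*$, and for $j$ large enough $x_*$ lies inside it. The admissible comparison $g(x_{k_j};x_{k_j-1})\leq g(x_*;x_{k_j-1})$ then reads $f(x_{k_j})+h(x_{k_j};x_{k_j-1})\leq f(x_*)+h(x_*;x_{k_j-1})$; dropping $h(x_{k_j};x_{k_j-1})\geq 0$ and using that $h(x_*;x_{k_j-1})\to 0$ (continuity of $h(\cdot;x_{k_j-1})$ at $x_{k_j}\to x_*$, uniform over $j$ because $L_p^h$ is independent of the base point and the iterates are bounded, combined with $h(x_{k_j};x_{k_j-1})\to 0$ from Theorem \ref{th:nonconv-gen}), one obtains $\limsup_j f(x_{k_j})\leq f(x_*)$, which together with lower semicontinuity gives $f(x_*)=f_*$. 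No assumption beyond those of the lemma is needed. Note also that after this shift the surrogate base point is $x_{k_j-1}$, not $x_{k_j}$, so your estimate $h(x^*;x_{k_j})\leq \frac{L_p^h}{(p+1)!}\|x^*-x_{k_j}\|^{p+1}$ must be replaced by the uniform-continuity comparison with $h(x_{k_j};x_{k_j-1})$ just described.
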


\begin{proof}
	From Theorem \ref{th:nonconv-gen_dom} we have  that the sequence $(x_k)_{k \geq 0}$ is bounded, hence the set of limit points $\Omega(x_0)$ is also bounded.  Closedness of  $\Omega(x_0)$ also follows by observing that $\Omega(x_0)$ can be viewed as an intersection of closed sets, i.e.  $\Omega(x_0) = \cap_{j \geq 0} \overline{ \cup_{k \geq j} \{x_k\} }$. Hence, $\Omega(x_0)$ is a compact set  and $\text{dist}(x_k, \Omega(x_0)) \to 0$ as $k \to \infty$.   Further, let us also show that  $f(\Omega(x_0))$ is constant. From Theorem \ref{th:nonconv-gen_dom} we have that $\left(f(x_{k})\right)_{k \geq 0}$ is monotonically decreases and since $f$ is assumed bounded from below by $f^* > -\infty$, it converges, let us say to $ f_* > -\infty$, i.e. , $f(x_{k}) \to  f_*$ as $k \to \infty$. On the other hand let $x_*$ be a limit point of the sequence $(x_k)_{k \geq 0}$, i.e.,   $x_* \in \Omega(x_0)$.  This means that there is a subsequence   $(x_{k_j})_{j \geq 0}$ such that $ x_{k_j} \to x_*$ as $j \to \infty$.   If $f$ is continuous, then   $\lim_{j \to \infty}  f(x_{k_j}) = f(x_*)$ and  $ f(x_*)  = \lim_{j \to \infty}  f(x_{k_j}) = f_*$. 
	%If $x_{k+1}$ is a local minimum of  \eqref{eq:sp}, then from the  lower semicontinuity of $f$ we always have: $$\liminf_{j \to \infty}  f(x_{k_j}) \geq f(x_*).$$ Since we assume $x_{k_j}$ be a local minimum of $g(\cdot; x_{k_j-1})$, there exists $\delta_j >0$ such that $  g(x_{k_j}; x_{k_j-1}) \leq  g(y; x_{k_j-1}) $ for all $\|y - x_{k_j} \| \leq \delta_j$. As $ x_{k_j} \to x_*$, then there exists $j_0$ such that for all $j \geq j_0$ we have  $\|x_* - x_{k_j} \| \leq \delta_j$ and consequently $   g(x_{k_j}; x_{k_j-1}) \leq  g(x_*; x_{k_j-1}) $. Since $g = h+ f$, we get $ h(x_{k_j}; x_{k_j-1}) + f( x_{k_j})  \leq  h(x_*; x_{k_j-1}) + f(x_*)$  for all $j \geq j_0$. Using that  $h(\cdot;  x_{k_j-1})$ is continuous and taking $\limsup$,  we get:  $$\limsup_{j \to \infty}  f(x_{k_j}) \leq f(x_*).$$ Hence,  we get $\lim_{j \to \infty}  f(x_{k_j}) = f(x_*)$ and  $ f(x_*)  = \lim_{j \to \infty}  f(x_{k_j}) = f_*$.
	 Hence,  we get $ f(\Omega(x_0)) = f_*$.   
\end{proof}

\medskip

\noindent Note that   in order to derive rates we need to consider explicit form for the function $\kappa$ in Definition \ref{def:kl}. Below we consider the case of semi-algebraic functions, i.e., we assume that $f$ satisfies the KL property \eqref{eq:kl}  (from previous lemma we note that  all the conditions  of the KL property \eqref{eq:kl} are satisfied). 

\begin{theorem}
\label{th:nonconv-gen-kl}
Let the (possibly   nonconvex) objective function $f$ in the optimization problem \eqref{eq:optpb} be proper, continuous,  have directional derivates at any point  $x \in \mathcal{X}$ and satisfy Assumption \ref{def:sur} for a given  $p \geq 1$. Additionally, assume that $f$  satisfies the KL property \eqref{eq:kl} for some $r>1$.   Then,  the sequence  $(x_k)_{k \geq 0}$ generated by GHOM  satisfies:
	\begin{enumerate}
		\item If $r > p+1$, then $f(x_k)$ converges locally to $f_*$ at a superlinear rate.
		\item If $r = p+1$, then $f(x_k)$ converges locally to $f_*$ at a linear rate.
		\item  If $r < p+1$, then $f(x_k)$ converges locally to $f_*$ at a sublinear rate.	
	\end{enumerate}	  
\end{theorem}

\begin{proof}
	Note that the set of limit points $\Omega(x_0)$ of the sequence $(x_k)_{k \geq 0}$ generated by GHOM is compact, $\text{dist}(x_k, \Omega(x_0)) \to 0$ as $k \to \infty$ and $f(\Omega(x_0))$ is constant taking value $f_*$ (see Lemma 	\ref{lemma:kl}).  Moreover, we have that $\left(f(x_{k})\right)_{k \geq 0}$  monotonically decreases and converges to $ f_*$.	Then, for any $\delta, \epsilon >0$ there exists $k_0$ such that:
	\[   x_k \in \{x \in \mathcal{X}: \text{dist}(x, \Omega(x_0)) \leq \delta, f_* < f(x) < f_* + \epsilon  \}  \quad \forall k \geq k_0 \; \text{ and } \;  \eqref{eq:kl} \; \text{ holds}. \]
	Hence, all the conditions of the KL property from  Definition \ref{def:kl}  are satisfied and we can exploit the KL inequality  \eqref{eq:kl}.  Combining 	\eqref{eq: hf_dom} with 	\eqref{eq: hy_dom} we get:
		\vspace*{-0.2cm}
	\begin{align}
		\label{eq:hfy}
		\frac{M_p^h - L_p^h}{(p+1)!} \| y_{k+1} - x_{k+1}\|^{p+1} \leq f(x_k) - f(x_{k+1}) \quad \forall k \geq 0, 	
	\end{align}	
for any fixed  $M_p^h > L_p^h$. From  $x_{k+1}$ being a stationary point of subproblem 	\eqref{eq:sp},  we have that  $D g(x_{k+1};{x}_{k})[y -x_{k+1}]  \geq 0$ for all $y \in \mathcal{X}$.  From the relation  $h= g-f$, using that $h$ is differentiable  we further get  that $D h(x_{k+1};{x}_{k}) [y -x_{k+1}]  + D f(x_{k+1}) [y -x_{k+1}] \geq 0$   for all $y \in \mathcal{X}$. Moreover, using that $h$ is differentiable,   $D f(x_{k+1}) [y -x_{k+1}] = \sup_{f^{x_{k+1}} \in \partial f(x_{k+1})} \langle  f^{x_{k+1}} ,  y -x_{k+1}  \rangle$ and $\mathcal{X}$ is convex set, from calculs rules  we get that  $- \nabla h(x_{k+1};x_k)  \in   \partial (f + \textbf{1}_{\mathcal{X}})(x_{k+1}) $.   Then, using 	\eqref{eq:optcondy_domf} and the definition of  $H_{k+1}$ from 	\eqref{eq:hk+1_dom}, 
	we obtain: 
	\begin{align} 
		\label{eq:nfyx} 
		S(x_{k+1})  & = \text{dist} (0, \partial (f + \textbf{1}_{\mathcal{X}})(x_{k+1}))   \leq   \| \nabla h(x_{k+1};x_{k}) \|_*  \nonumber \\
		& =  \| H_{k+1}(y_{k+1}-x_{k+1}) \|_*  \leq c_{\max} \cdot \| y_{k+1}-x_{k+1} \|  \quad \forall k \geq 0, 
	\end{align}  
	where $c_{\max} = \max_{k \geq 0} \| H_{k+1}\| < \infty$ (see  \eqref{eq:Hyx_dom}).  From the KL property \eqref{eq:kl}, we have:
	\begin{align*}
		f(x_{k+1})	- f_* & \leq \sigma_r  S(x_{k+1})^r \overset{\eqref{eq:nfyx} }{\leq}  \sigma_r  c_{\max}^r  \| y_{k+1}-x_{k+1}  \|^r  \\
		& \overset{\eqref{eq:hfy} }{\leq}  \sigma_r  c^r_{\max}  \left( \frac{(p+1)!}{M_p^h - L_p^h}  \right)^{\frac{r}{p+1}}  (f(x_k) - f(x_{k+1}))^{\frac{r}{p+1}}  \quad \forall k \geq k_0.
	\end{align*}		
\noindent 	Let us denote $\Delta_k = f(x_k) - f_*$ and $C_{\max} = \sigma_r  c^r_{\max}  \left( \frac{(p+1)!}{M_p^h - L_p^h}  \right)^{\frac{r}{p+1}} $. Then, we obtain:
	\begin{align}
		\label{eq:lkyx0}	
		\Delta_{k+1} \leq C_{\max} (\Delta_k - \Delta_{k+1})^{\frac{r}{p+1}}   \quad \forall k \geq k_0. 
	\end{align}

	\vspace{-0.2cm}
	
	\noindent If we denote \red{$  \tilde \Delta_k =  C^{\frac{p+1}{r-p-1}}_{\max}  \Delta_k $}, we further get the recurrence:
	\begin{align}
		\label{eq:lkyx} 
		\tilde \Delta_{k+1}^{\frac{p+1}{r}} \leq \tilde \Delta_k - \tilde  \Delta_{k+1}   \quad \forall k \geq k_0. 
	\end{align}  
	We distinguish the following cases:\\
	\textit{Case (i):} $r > p+1$. Then, from \eqref{eq:lkyx}  we have: 
	\[  \tilde \Delta_{k+1} \leq \left(1 + \tilde \Delta_{k+1}^{\frac{p+1}{r} -1} \right)^{-1}  \tilde \Delta_k,   \]  
		
		\vspace{-0.2cm}
	
	\noindent and since $f(x_k) \to f_*$ it follows that $\tilde \Delta_{k+1} \to 0$ and hence $\tilde \Delta_{k+1}^{\frac{p+1}{r} -1}  \to \infty$ as $\frac{p+1}{r} -1 <0$. Therefore, in this case $f(x_k)$ locally converges  to $f_*$ at a superlinear rate. \\
	\textit{Case (ii):} $r= p+1$.  Then, from \eqref{eq:lkyx0} we have:
	\[  (1 +C_{\max})  \Delta_{k+1} \leq C_{\max} \Delta_{k},  \]
	hence in this case $f(x_k)$ locally converges  to $f_*$ at a linear rate. \\
	\textit{Case (iii):} $r < p+1$.  Then, from \eqref{eq:lkyx} we have:
	\[  \tilde \Delta_{k+1}^{1 + \frac{p+1-r}{r}} \leq \tilde \Delta_k - \tilde  \Delta_{k+1}, \; \text{with} \;\;  \frac{p+1-r}{r} >0, \]
	
		\vspace{-0.2cm}
		
\noindent 	and from Lemma 11 in \cite{Nes:19Inxt} we have for some constant $\alpha>0$: 
	\[    \tilde \Delta_{k}  \leq \frac{   \tilde \Delta_{k_0} }{ (1 + \alpha (k-k_0))^ \frac{r}{p+1-r}} \quad \forall k \geq k_0, \] 
Therefore, in this case $f(x_k)$ locally converges  to $f_*$ at a sublinear rate. 
\end{proof}

\noindent Note that  the superlinear rate  from 	Theorem \ref{th:conv-conv_super0} was derived for uniformly convex functions (i.e., the function $\kappa$ in Definition  \ref{def:kl} is of the form $\kappa(t) = c_\text{unif} \cdot t^{1/q}$, with $q \geq 2$), while the superlinear rate  from Theorem \ref{th:nonconv-gen-kl} was obtained for the class of (possibly nonconvex) semi-algebraic functions (i.e.,  function $\kappa$ in Definition  \ref{def:kl} is of the form $\kappa(t) = c_\text{sa} \cdot t^{1-1/r}$, with $r > 1$).  Consequently, also the proofs in these two theorems are different.

%%%%%%%%%%%%%%%%%%%%%%%%
%%%%%%%%%%%%%%%%

\medskip 

\noindent  Further, for nonconvex unconstrained problems we can further derive convergence rates for the sequence $(x_k)_{k\geq 0}$ generated by GHOM in terms of first- and second-order optimality criteria. We use the notation $a^+ = \max(a, 0)$.

\begin{theorem}
	\label{th:soc}
	Let $f: \rset^n \to \rset$ be a $p>1$ times differentiable function with smooth $p$ derivative having Lipschitz constant $L_p^f$ and $\mathcal{X} = \rset^n$. In this case, according to Example \ref{expl:4}, we can consider the surrogate function: 
	$$g(y;x) = T_p^f(y;x) + \frac{M_p}{(p+1)!} \| y-x\|^{p+1},$$ 
	
	\vspace{-0.2cm}
	
	\noindent 	with $M_p > L_p^f$.  Additionally assume that $x_{k+1}$ is a local minimum of  subproblem 	\eqref{eq:sp}.  Then,  the sequence $(x_{k})_{k\geq0}$ generated by  GHOM satisfies the following convergence rate in terms of first and second-order optimality conditions:
	\vspace*{-0.2cm}
	\begin{align*}
		\min_{i=1:k}  &  \max \!  \left(	\! \left(- \lambda_{\min}^+ (\nabla^2 f(x_{i})) \right)^{\frac{p+1}{p-1}}\!,  	
		p^{\frac{p+1}{p}}  \lambda_{\max}^{\frac{p+1}{p-1}} (B) \!  \left( \! \frac{M_p +L_p^f}{(p-1)!} \! \right)^{\frac{p+1}{p(p-1)}} \!\! \| \nabla f(x_{i})\|_{*}^{\frac{p+1}{p}} \! \right) \\ 
		& \leq  \frac{p(p+1)}{((p-1)!)^{\frac{2}{p-1}}} \cdot  \frac{(M_p+L_p^f)^{\frac{p+1}{p-1}}}{M_p -L_p^f} \cdot (f(x_0) - f^*) \cdot  \frac{\lambda_{\max}^{\frac{p+1}{p-1}} (B)}{k}.
	\end{align*}
\end{theorem}

\begin{proof}
	From the descent property \eqref{eq:descGhom}, we have: $g(x_k; x_k) \geq g(x_{k+1};x_k)$. Further, taking into account the  property $(iii)$ of the  surrogate function, we get:
	\vspace*{-0.2cm}
	\begin{align*} 
		f(x_k)  \geq f(x_k) +\sum_{i=1}^{p} \frac{1}{i !} D^{i} f(x_k)[x_{k+1}-x_k]^{i} + \frac{M_p}{(p+1)!} \|x_{k+1}-x_k\|^{p+1}.  \end{align*}
	
	\vspace{-0.3cm}
	
	\noindent If we define $r_k = \|x_{k+1}-x_k\|$, then we further get: 
	\vspace*{-0.2cm}
	\begin{equation}
		\label{eq:11}
		-\frac{M_p}{(p+1)!} r_{k+1}^{p+1} \geq \sum_{i=1}^{p} \frac{1}{i !} D^{i} f(x_k)[x_{k+1}-x_k]^{i}.
	\end{equation} 
	
	\vspace{-0.2cm}
	
	\noindent In  view of relation \eqref{eq:TayAppBound}, we have:
	\vspace*{-0.2cm}
	\begin{align*}
		& f(x_{k+1})  \leq T_p^f(x_{k+1}; x_k) + \frac{L_p^f}{(p+1)!} r_{k+1}^{p+1} \\
		& \leq f(x_k) +\! \sum_{i=1}^{p} \! \frac{1}{i !} D^{i} f(x_k)[x_{k+1} \!-x_k]^{i} \!+ \frac{L_p^f}{(p+1)!} r_{k+1}^{p+1}  \!\overset{\eqref{eq:11}}{\leq}\! f(x_k) \!- \frac{M_p - L_p^f}{(p+1)!} r_{k+1}^{p+1}. 	
	\end{align*}
	
	\vspace{-0.2cm}
	
	\noindent Hence, we obtain the following descent relation:
	\vspace*{-0.2cm}
	\begin{equation} \label{eq:14}
		f(x_k) - f(x_{k+1}) \geq \frac{M_p -L_p^f}{(p+1)!} r_{k+1}^{p+1}.
	\end{equation}
	Further, from the optimal conditions for $x_{k+1}$ we obtain:
	\begin{equation}\label{eq:9}
		\nabla g(x_{k+1};x_k) = \nabla T_p^f(x_{k+1};x_k) +  \frac{M_p}{p!}\| x_{k+1}-x_k\|^{p -1} B (x_{k+1} -x_k) = 0. 
	\end{equation}
	Using inequality \eqref{eq:TayAppG1} for $f$, we get:
	\vspace*{-0.2cm}
	\begin{equation*}
		\| \nabla f(x_{k+1}) - \nabla T_p^f(x_{k+1}; x_k) \|_{*} \leq \frac{L_p^f}{p!} \| x_{k+1} - x_k\|^p.
	\end{equation*}
	This yields  the following relation:
	\vspace*{-0.2cm}
	\begin{align*}
		\| \nabla f(x_{k+1})\|_{*} &\leq \|\nabla T_p^f(x_{k+1}, x_k) \|_{*} + \frac{L_p^f}{p!} r_{k+1}^p \\
		& \overset{\eqref{eq:9}}{\leq} \| -\frac{M_p}{p!} r_{k+1}^{p-1} B (x_{k+1} - x_k)\|_{*} + \frac{L_p^f}{p!} r_{k+1}^{p}\leq \frac{M_p + L_p^f}{p!} r_{k+1}^p
	\end{align*}	
	or equivalently 
	\vspace*{-0.3cm}
	\begin{equation}\label{eq:10}
		\left(\frac{p!}{M_p + L_p^f}\right)^{\frac{p+1}{p}}	\| \nabla f(x_{k+1})\|_{*}^{\frac{p+1}{p}} \leq r_{k+1}^{p+1}.
	\end{equation}
	Moreover, since  $x_{k+1}$ is a local minimum of $g(\cdot; x_k)$, we have:
	$ \nabla^2 g(x_{k+1}; x_{k}) \succcurlyeq 0.	$
	Computing explicitly the above expression, we obtain:
	\vspace*{-0.2cm}
	\begin{equation} \label{eq:13}
		\nabla^2 T_p^f(x_{k+1}; x_k) + \frac{M_p(p-1)}{p!} r_{k+1}^{p-3} B (x_{k+1} -x_k) (x_{k+1} -x_k)^T B  + \frac{M_p}{p!} r_{k+1}^{p-1} B \succcurlyeq 0.
	\end{equation}
	However, from \eqref{eq:TayAppG2} applied to $f$, we have:
	\vspace*{-0.3cm}
	\begin{equation*}
		\nabla^2 f(x_{k+1}) +\frac{L_p^f}{(p-1)!} r_{k+1}^{p-1} B \succcurlyeq \nabla^2 T_p^f(x_{k+1};x_k).
	\end{equation*}
	Returning with the above relation in \eqref{eq:13} we get:
	\vspace*{-0.2cm}
	\begin{equation*}
		\nabla^2 f(x_{k+1}) +\frac{M_p + pL_p^f}{p!} r_{k+1}^{p-1} B + \frac{M_p(p-1)}{p!} r_{k+1}^{p-3} B (x_{k+1} -x_k) (x_{k+1} -x_k)^T B \succcurlyeq 0.
	\end{equation*}
	Rearranging the terms, we obtain:
	\vspace*{-0.2cm}
	\begin{align*}
		- \nabla^2 f(x_{k+1}) & \preccurlyeq \frac{M_p + pL_p^f}{p!} r_{k+1}^{p-1} B + \frac{M_p(p-1)}{p!} r_{k+1}^{p-3}B (x_{k+1} -x_k) (x_{k+1} -x_k)^T B \\
		&\preccurlyeq  \frac{M_p + pL_p^f}{p!} r_{k+1}^{p-1} B + \frac{M_p(p-1)}{p!} r_{k+1}^{p-3} r_{k+1}^{2} B \preccurlyeq \frac{M_p+L_p^f}{(p-1)!} r_{k+1}^{p-1} B.
	\end{align*}
	Tacking the maximum eigenvalue, we obtain:
	\vspace*{-0.2cm}
	\begin{align*}
		\!	\lambda_{\max} (\!-\!\nabla^2 \!f(x_{k+\!1})\!) \!\leq\!\! \frac{M_p\!+\!L_p^f}{(p\!-\!1)!} r_{k+1}^{p-1} \lambda_{\max} (B) \, \text{or}  	-\!\!\lambda_{\min} (\nabla^2 \!f(x_{k+\!1})\!) \!\!\leq\!\! \frac{M_p\!+\!L_p^f}{(p\!-\!1)!} r_{k+1}^{p-1} \lambda_{\max} (B).
	\end{align*}

	\vspace{-0.3cm}
	
\noindent 	Finally, using the notation $a^+ = \max(a, 0)$ and the fact that if $a \leq b$ for some $b \geq 0$, then also $a^+ \leq b$,  the above inequality yields:
	
	\vspace{-0.6cm}
	
	\begin{equation}\label{eq:15}
		\left(\frac{(p-1)!}{(M_p+L_p^f)  \lambda_{\max} (B)}\right)^{\frac{p+1}{p-1}}  (- \lambda_{\min}^+ (\nabla^2 f(x_{k+1})))^{\frac{p+1}{p-1}} \leq  r_{k+1}^{p+1}.
	\end{equation}

	\vspace{-0.2cm}
	
	\noindent	By combining \eqref{eq:10} and \eqref{eq:15} we get the following compact form:
	
	\vspace{-0.6cm}

	\begin{align*}
		& \zeta_{k+1}  := \max  \Bigg(	\left(\frac{(p-1)!}{(M_p+L_p^f)  \lambda_{\max} (B)}\right)^{\frac{p+1}{p-1}} (- \lambda_{\min}^+ (\nabla^2 f(x_{k+1})))^{\frac{p+1}{p-1}},\\ 
		& \qquad \qquad \qquad  \left(\frac{p!}{M_p + L_p^f}\right)^{\frac{p+1}{p}}	\| \nabla f(x_{k+1})\|_{*}^{\frac{p+1}{p}} \Bigg)  \leq r_{k+1}^{p+1}.
	\end{align*} 

	\vspace*{-0.2cm}
	
	\noindent	Telescoping  \eqref{eq:14}, we get:
		\vspace*{-0.2cm}
	$$ f(x_0) - f^* \geq \frac{M_p -L_p^f}{(p+1)!} \sum_{i=0}^{k-1}r_{i+1}^{p+1} \geq \frac{M_p -L_p^f}{(p+1)!} \sum_{i=0}^{k-1} \zeta_{i+1} \geq \frac{k(M_p -L_p^f)}{(p+1)!} \min_{i=1:k} \zeta_{i}. $$
	
		\vspace{-0.2cm}
	
	\noindent Rearranging the terms we get the statement of the theorem.
\end{proof}

\noindent Note that   \cite{BriGar:17, CarGou:11, cartis:2020} obtains similar convergence results for their Taylor-based algorithsm as in Theorem \ref{th:soc}. However,   $x_{k+1}$ in these papers  must  satisfy some second-order optimality conditions. E.g., in   \cite{CarGou:11} the authors require: $   \max \left (0, \!-\lambda_{\min}(\nabla^2 g(x_{k+1}; x_k)\right) \!\leq\! \theta \| x_{k+1} - x_k \|^{p-1} $ for some $\theta \!>0$ and $\| \nabla g(x_{k+1}; x_k)\| \! \leq\! \theta \| x_{k+1} - x_k\|^p $.

\section{Conclusions}
This paper has derived a unified  analyzis of  the convergence behaviour  of  higher-order majorization-minimization algorithms for minimizing convex/nonconvex functions that admit a surrogate model such that the corresponding error function has a $p \geq 1$ higher-order Lipschitz continuous derivative.  Under these settings we have derived global  convergence results for our algorithm in terms of function values or  first-order optimality conditions. Faster rates of convergence were established  under uniform convexity or KL property of the objective function.  One can also notice that although the conditions for the surrogate from Assumption \ref{def:sur} were sufficienttly powerful to derive convergence results for general (non)convex problems,  for deriving  convergence rates in  some first-order optimality criterion an additional condition \eqref{eq:unifh_domf} on the surrogate was required in the nonconex setting. Finally, for unconstrained  nonconvex problems we have derived convergence rates in terms   of  first- and second-order optimality conditions.   Hence, our algorithmic framework led to  an elegant  and novel convergence analysis, yielding new results or covering results  that are  otherwise scattered across a dozen  works.

\section*{Acknowledgments}
	\noindent The research leading to these results has received funding from the NO Grants  RO-NO-2019-0184, under project ELO-Hyp, nr. 24/2020; UEFISCDI PN-III-P4-PCE-2021-0720, under project L2O-MOC, nr. 70/2022.  The authors would like to thank Yurii Nesterov for in-sightful discussions. 

\bibliographystyle{siamplain}
\bibliography{references}
%%%%%%%%%%%%%%%%%%%%%%%%%%%%%%%%%
%%%%%%%%%%%%%%%%%%%%%%%%%%%%%%

\end{document}